\font\emailfont=cmtt10
\newcommand\commentable[1]{#1}
\newtheorem{thm}{Theorem}
\newtheorem{question}[thm]{Question}
\newtheorem{theorem}{Theorem}[section]
\newtheorem{prop}[theorem]{Proposition}
\newtheorem{cor}[theorem]{Corollary}
\newtheorem{lemma}[theorem]{Lemma}
\newtheorem{defn}[theorem]{Definition}
\newtheorem{remark}[theorem]{Remark}
\def\endproofof{\relax\ifmmode\expandafter\endproofmath\else
  \unskip\nobreak\hfil\penalty50\hskip.75em\hbox{}\nobreak\hfil\bull
  {\parfillskip=0pt \finalhyphendemerits=0 \bigbreak}\fi}
\def\endproofofmath$${\eqno\bull$$\bigbreak}
\def\endproof{\relax\ifmmode\expandafter\endproofmath\else
  \unskip\nobreak\hfil\penalty50\hskip.75em\hbox{}\nobreak\hfil\bull
  {\parfillskip=0pt \finalhyphendemerits=0 \bigbreak}\fi}
\def\endproofmath$${\eqno\bull$$\bigbreak}
\def\bull{\vbox{\hrule\hbox{\vrule\kern3pt\vbox{\kern6pt}\kern3pt\vrule}\hrule}}
\newcommand{\Z}{\mathbb{Z}}
\newcommand\SpinC{\mathrm{Spin}^c}
\newcommand{\F}{\mathbb F}
\newcommand\relspinc{\underline{\spinc}}
\newcommand\Filt{\mathcal F}
\newcommand\ModSphere{\ModFlow\left({\mathbb S}\longrightarrow 
\Sym^{g-1}(\Sigma_{1})\times \Sym^2(\Sigma_{2})\right)}
\newcommand\ModSpheres\ModSphere
\newcommand\CF{CF}
\newcommand\CFa{\widehat{CF}}
\newcommand\HFa{\widehat{HF}}
\newcommand\UnparModSp{\widehat \ModSp}
\newcommand\UnparModFlow\UnparModSp
\newcommand\Mod\ModSp
\newcommand{\spinc}{\mathfrak s}
\newcommand\ModMaps{\mathcal M}
\newcommand\ModSp\ModMaps
\newcommand\alphas{\mbox{\boldmath$\alpha$}}
\newcommand\betas{\mbox{\boldmath$\beta$}}
\newcommand\spincrel\relspinc
\newcommand\Dual{\mathcal D}
\newcommand\Duality\Dual
\newcommand\ons{Ozsv{\'a}th and Szab{\'o}}
\newcommand\os{{Ozsv{\'a}th-Szab{\'o}}}
\newcommand\Sym{\mathrm{Sym}}
\newcommand\Wdual{W^\dagger}
\title[{On naturality of the Ozsv\'ath-Szab\'o contact invariant}] 
{On naturality of the Ozsv\'ath-Szab\'o contact invariant}
\author[Matthew Hedden]{Matthew Hedden}
\address{Department of
Mathematics, Michigan State University, MI \newline
\indent{\emailfont{mhedden@math.msu.edu}}}
\author[Lev Tovstopyat-Nelip]{Lev Tovstopyat-Nelip}
\address{Department of
Mathematics, Michigan State University, MI \newline
\indent{\emailfont{tovstopy@math.msu.edu}}}
\thanks{MH gratefully acknowledges support from NSF grant DMS-1709016. }
\begin{document}

\begin{abstract}  We discuss functoriality properties of the \os \  contact invariant, and expose a number of results which seemed destined for folklore.  We clarify the (in)dependence of the invariant on the basepoint, prove that it is functorial with respect to contactomorphisms, and show that it is strongly functorial under Stein cobordisms.    \end{abstract}

\maketitle
\section{Introduction}
Heegaard Floer homology  provides a seemingly ever-growing number of invariants for low-dimensional topology.  Its influence has perhaps most firmly been felt within the realm of $3$-dimensional contact geometry, upon which the \os\ contact invariant \cite{Contact} and its refinements  have had a profound impact.  In its most basic form, the contact invariant of a closed $3$-contact manifold $(Y,\xi)$ is an element residing in the Heegaard Floer homology group $\HFa(-Y)$ of the underlying manifold, equipped with opposite the orientation it receives from the contact structure (a group which, perhaps more naturally, can be identified with the Floer cohomology of $Y$).  A decade after its initial development by \ons\ \cite{HolDisk,HolDiskTwo}, Juhasz, Thurston, and Zemke  discovered  a subtle dependence of Heegaard Floer homology on a choice of basepoint underlying its definition \cite{JTZ}. Indeed, they showed that Floer homology cannot associate a well-defined group to a 3-manifold alone, but only to a 3-manifold equipped with a basepoint.    This raises the questions of whether the contact element is well-defined and how it depends on the basepoint,  questions raised but not pursued in \cite{JTZ}.  

The purpose of this article is to examine these questions, and further explore functoriality properties of the contact invariant.   As a first step, we show that the contact element is a well-defined element (as opposed to an orbit under the group of graded automorphisms) in the Floer homology of a  pointed contact 3-manifold, see Theorem \ref{well-defined} below.  To prove this, we first establish an appropriate definition and notion of equivalence for pointed contact 3-manifolds and incorporate these ideas into the Giroux correspondence.  We then revisit  \os's proof of invariance within the naturality framework of \cite{JTZ}, ensuring that Heegaard surfaces, links, open books, basepoints, etc. can be arranged to be explicitly embedded in a fixed 3-manifold.  

Having checked the aforementioned details, we turn to a refined understanding of invariance of the contact class, showing that it is functorial with respect to pointed contactomorphisms. 

\begin{thm}\label{thm:funcconintro}Suppose $f$ is a pointed contactomorphism  between pointed contact $3$-manifolds $(Y,\xi,w)$ and $(Y',\xi',w')$.  Then the  induced map on Floer  homology  \[f_*: \HFa(-Y,w)\rightarrow \HFa(-Y',w')\] carries $c(\xi,w)$ to $c(\xi',w')$.  
\end{thm}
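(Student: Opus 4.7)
The plan is to combine Theorem~\ref{well-defined} with naturality of the Heegaard Floer construction under pointed diffeomorphisms. The essential observation is that a pointed contactomorphism $f$ carries any compatible pointed open book of $(Y,\xi,w)$ to a compatible pointed open book of $(Y',\xi',w')$, and hence the pointed Heegaard diagram built from the first open book is carried, vertex-by-vertex, to the pointed Heegaard diagram built from the second. Once one knows that the functorial map $f_*$ acts at the chain level by the tautological bijection on generators, the theorem is immediate.

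Concretely, I would first fix an open book decomposition $(B,\pi)$ of $Y$ supporting $\xi$, with $w$ placed in the standard location prescribed by the definition of the contact invariant (for instance, on a fixed page, suitably far from the binding). The Ozsv\'ath--Szab\'o construction produces a pointed Heegaard diagram $\mathcal{H}=(\Sigma,\alphas,\betas,w)$ for $-Y$ together with a distinguished intersection point $x_\xi\in\Ta\cap\Tb$ whose class in $\HFa(-Y,w)$ is $c(\xi,w)$. Pushing every geometric ingredient forward by $f$ yields a compatible pointed open book $(f(B),\pi\circ f^{-1})$ of $(Y',\xi',w')$ and a pointed Heegaard diagram $f(\mathcal{H})=(f(\Sigma),f(\alphas),f(\betas),w')$ for $-Y'$, in which $f(x_\xi)$ is the corresponding distinguished generator. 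Theorem~\ref{well-defined}, applied to the pushed-forward open book, then identifies the class of $f(x_\xi)$ with $c(\xi',w')$.

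It remains to show that $f_*$ sends the class of $x_\xi$ to the class of $f(x_\xi)$. In the naturality framework of \cite{JTZ}, a pointed diffeomorphism $(Y,w)\to(Y',w')$ induces a morphism between the transitive systems of pointed Heegaard diagrams by relabeling, and on any pair of diagrams related by pushing forward via $f$ the chain map is the tautological bijection on intersection points. Combined with the previous paragraph this yields $f_*(c(\xi,w))=c(\xi',w')$, as desired.

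The principal obstacle is precisely the last compatibility: one must check that the functorial map $f_*$ produced by the naturality machinery---which is a priori assembled from a sequence of Heegaard moves, triangle maps, and isotopies connecting the source and target diagrams to an abstract representative---agrees with the evident pushforward isomorphism on the chain complexes of diagrams that differ only by $f$. This is a bookkeeping check at the level of transitive systems, specialized to the pointed contact category introduced earlier in the paper. Once it is in place, the theorem follows directly from the well-definedness statement of Theorem~\ref{well-defined}, without any further geometric input.
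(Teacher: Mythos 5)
Your proposal is correct and follows essentially the same route as the paper: push the open book and its adapted pointed Heegaard diagram forward under $f$, invoke the JTZ definition of the map induced by a pointed diffeomorphism as a pushforward of transitive systems, and use well-definedness of the contact element to identify the image with $c(\xi',w')$. The ``principal obstacle'' you flag at the end is actually not an obstacle at all---it is precisely the content of \cite[Section~2.5, Definition~2.42]{JTZ}, which the paper cites directly; otherwise your argument matches the paper's proof of Theorem~\ref{thm:funccon}, with the functorial Giroux correspondence (Proposition~\ref{functGiroux}) implicit in your observation that $f$ carries supporting open books to supporting open books.
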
 

\noindent The functoriality above is an immediate consequence of the functoriality of Floer homology under pointed diffeomorphisms from \cite{JTZ}, provided one parses the Giroux Correspondence in a categorical framework, which we clarify with Proposition \ref{functGiroux}.  

We then show that, while the group in which the contact element lives depends on the basepoint, the contact element itself does not.  This can be explained as follows: the dependence of the Floer homology of $Y$ on the basepoint is determined by a functor \[HF(Y,-): \Pi_1(Y)\rightarrow \mathrm{iGrp}\] from the fundamental groupoid of $Y$ to the isomorphism subcategory of groups.  Concretely, this just means that there is a well-defined  isomorphism  between Floer groups $\HFa(Y,w)$ and $\HFa(Y,w')$ associated to a homotopy class of a path between  $w$ and $w'$, which is compatible with concatenation (see the next section for more details).   If one restricts to the subgroups of $\HFa(Y,w)$ spanned by contact classes, which we denote $cHF(Y,w)$, this functor yields a transitive system indexed by points in $Y$; that is, the isomorphisms $cHF(Y,w)\rightarrow cHF(Y,w')$ are independent of paths.  We can therefore consider the direct limit of the transitive system, which we call the {\em contact subgroup} of Floer homology, and denote $cHF(Y)$.

\begin{thm}\label{thm:basepointindependence} The contact subgroup $cHF(Y)$ is a well-defined invariant of an (unpointed) 3-manifold, functorial with respect to  diffeomorphisms.  There is an  element $c(\xi)\in cHF(Y)$, associated to a contact structure $\xi$ on $Y$, and the map $f_*:cHF(Y)\rightarrow cHF(Y')$ induced by a contactomorphism  $f:(Y,\xi)\rightarrow (Y',\xi')$ sends $c(\xi)$ to $c(\xi')$.
\end{thm}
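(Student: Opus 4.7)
The plan is to reduce the theorem to Theorem~\ref{thm:funcconintro} by showing that the path-dependence of the basepoint-moving isomorphisms trivializes on the subgroup spanned by contact classes, and then to package this into a functorial direct-limit construction.

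The main step is to prove the following: for any contact structure $\xi$ on $Y$ and any smooth path $\gamma$ from $w$ to $w'$ in $Y$, the Juh\'asz--Thurston--Zemke basepoint-moving isomorphism $\gamma_*\colon \HFa(-Y,w) \to \HFa(-Y,w')$ satisfies $\gamma_*\, c(\xi,w) = c(\xi,w')$. The strategy is to exhibit $\gamma_*$ as the map induced by a pointed contactomorphism and then invoke Theorem~\ref{thm:funcconintro}. Since contact vector fields on $(Y,\xi)$ are parameterized by smooth contact Hamiltonians and the evaluation map from contact Hamiltonians to tangent vectors at a point is surjective (the Reeb component is captured by the Hamiltonian's value and the $\xi$-component by its differential along $\xi$, using nondegeneracy of $d\alpha|_\xi$), I can produce a smooth time-dependent contact Hamiltonian $H_t \colon Y \to \R$ whose associated contact vector field $X_{H_t}$ satisfies $X_{H_t}(\gamma(t)) = \gamma'(t)$. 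Integrating yields an ambient contact isotopy $\phi_t$ of $(Y,\xi)$ with $\phi_0 = \Id$ and $\phi_t(w) = \gamma(t)$; in particular $\phi_1 \colon (Y,\xi,w) \to (Y,\xi,w')$ is a pointed contactomorphism. Because the JTZ basepoint-moving map can be realized by any ambient isotopy carrying $w$ along $\gamma$, one has $\gamma_* = (\phi_1)_*$, and Theorem~\ref{thm:funcconintro} then gives $\gamma_*\, c(\xi,w) = c(\xi,w')$.

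Path-independence on the contact subgroups is an immediate consequence: given two paths $\gamma_1,\gamma_2$ from $w$ to $w'$, both send every generator $c(\xi,w)$ to the common element $c(\xi,w')$, hence agree on the subgroup $cHF(Y,w)$ they span. The system $\{cHF(Y,w)\}_{w \in Y}$ is therefore transitive in the sense of the introduction, the direct limit $cHF(Y) := \varinjlim_w cHF(Y,w)$ is a well-defined unpointed invariant, and the consistent images of the $c(\xi,w)$ define an element $c(\xi) \in cHF(Y)$. Naturality of the JTZ construction under diffeomorphisms of $Y$ gives, for any diffeomorphism $f \colon Y \to Y'$, a square relating basepoint-moving maps along $\gamma$ on $Y$ and along $f \circ \gamma$ on $Y'$ via $f_*$; restricting to contact subgroups and passing to direct limits yields the desired functorial map $f_* \colon cHF(Y) \to cHF(Y')$. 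Finally, when $f$ is a contactomorphism $(Y,\xi) \to (Y',\xi')$, Theorem~\ref{thm:funcconintro} applied at any single basepoint descends under the direct limit to give $f_*\, c(\xi) = c(\xi')$.

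The main obstacle is the main step above. The loop action of $\pi_1(Y,w)$ on $\HFa(-Y,w)$ is not a priori trivial, so the desired invariance on contact classes is not a purely topological fact; the argument hinges on the flexibility of contact vector fields, which lets one promote the abstract ambient isotopy implicit in the definition of $\gamma_*$ to a genuine contact isotopy, precisely so that Theorem~\ref{thm:funcconintro} can be applied. Care is also needed to ensure that the JTZ basepoint-moving map is canonically identified with the map induced by \emph{any} ambient isotopy carrying $w$ along $\gamma$, so that choosing a contact isotopy is legitimate; this follows from the naturality framework of \cite{JTZ} but must be stated carefully.
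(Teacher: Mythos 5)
Your proposal is correct and follows essentially the same route as the paper: you reduce to showing the JTZ basepoint-moving isomorphism $\gamma_*$ carries $c(\xi,w)$ to $c(\xi,w')$, realize it as a map induced by a pointed contactomorphism, and invoke Theorem~\ref{thm:funcconintro}, then package this into a transitive system and direct limit. The only minor cosmetic difference is in how the contactomorphism is built --- you prescribe a time-dependent contact Hamiltonian along $\gamma$, whereas the paper (Proposition~\ref{contactpointpushing}) perturbs $\gamma$ to a transverse arc, applies the standard neighborhood theorem, and uses the Reeb flow of an extended contact form --- but these are equivalent in spirit and rigor.
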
 

One can view this result in two ways.  On the one hand, it follows from the functoriality established in Theorem \ref{thm:funcconintro}, together with the fact that we can realize the change-of-basepoint diffeomorphism associated to a homotopy class of path by a contactomorphism (see Proposition \ref{contactpointpushing}).  On the other, it can be viewed as a consequence of  Zemke's calculation of the representation of the fundamental group on Floer homology in terms of the $H_1(Y)/\mathrm{Tor}$ action and the basepoint action $\Phi_w$, together with the fact that contact classes are in the kernel of the $H_1(Y)/\mathrm{Tor}$ action.  Adopting the latter perspective, we see that the contact subgroup is a subgroup of a larger basepoint independent subgroup, arising as the kernel of the $H_1(Y)/\mathrm{Tor}$ action.  Note that these considerations dash  any na{i}ve hope that Heegaard Floer homology is generated by contact classes, much less to elements associated to taut foliations, and indicate that such a conjecture might more reasonably be made in the context of a twisted coefficient system in which the $H_1(Y)/\mathrm{Tor}$ action vanishes, e.g. totally twisted coefficients.

Having clarified the definition and invariance properties of the contact element, we  then show that it is functorial under Stein cobordisms in a  precise way.

\begin{thm}\label{thm:naturalityintro}  Suppose $(W,J,\phi)$ is a Stein cobordism from a contact 3-manifold $(Y_1,\xi_1)$ to a contact 3-manifold $(Y_2,\xi_2)$.  Then \[F_{\Wdual,\mathfrak{k}}(c(Y_2,\xi_2))=c(Y_1,\xi_1),\]
where  $W$ is viewed as a cobordism from $-Y_2$ to $-Y_1$, and $\mathfrak{k}$  is the canonical $\SpinC$ structure associated to $J$.  Moreover 
\[F_{\Wdual,\spinc}(c(Y_2,\xi_2))=0\]
for $\spinc\ne \mathfrak{k}$.
\end{thm}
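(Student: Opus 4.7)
The plan is to decompose $(W,J,\phi)$ into elementary Stein cobordisms, verify the theorem for each piece, and assemble via the functoriality of cobordism maps under composition. By the Eliashberg handle decomposition theorem (as refined by Cieliebak-Eliashberg), $(W,J,\phi)$ admits a decomposition into Stein $1$-handles and Stein $2$-handles, where each $2$-handle is attached along a Legendrian knot with framing $tb-1$. Heegaard Floer cobordism maps satisfy a composition law, and the canonical $\SpinC$ structure $\mathfrak{k}$ restricts to the canonical $\SpinC$ structure on each intermediate contact manifold in the decomposition. It therefore suffices to establish the theorem for a single $1$-handle cobordism and a single Legendrian $2$-handle cobordism.

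For the $1$-handle case, $(Y_2,\xi_2) = (Y_1,\xi_1)\#(S^1\times S^2,\xi_{std})$, with $\xi_{std}$ the unique tight contact structure. Via the K\"unneth formula, $\HFa(-Y_2)\cong \HFa(-Y_1)\otimes \HFa(S^1\times S^2)$, and a standard argument identifies $c(\xi_2)$ with $c(\xi_1)\otimes c(\xi_{std})$, where $c(\xi_{std})$ is the top generator of the canonical $\SpinC$ summand. The dual $1$-handle cobordism map evaluates against this distinguished generator, producing $c(\xi_1)$ and annihilating the contributions from non-canonical $\SpinC$ structures.

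For the $2$-handle case, we adapt the argument of \ons\ (Theorem 4.2 of \emph{Heegaard Floer homology and contact structures}). Choose a compatible open book $(B,\pi)$ for $(Y_1,\xi_1)$, stabilizing if necessary, so that the attaching Legendrian $L$ lies on a page. Then $(Y_2,\xi_2)$ is described by an open book with the same page and monodromy composed with a right-handed Dehn twist along $L$. Using the Honda-Kazez-Matic description of the contact invariant from an open book, both $c(\xi_1)$ and $c(\xi_2)$ are represented by distinguished generators $x_{\xi_1}, x_{\xi_2}$ in a Heegaard triple $(\Sigma,\alphas,\betas,\gammas)$ subordinate to the $2$-handle. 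The cobordism map on $\HFa$ is then computed by a triangle count, and one identifies a unique small holomorphic triangle connecting $x_{\xi_2}$ to $x_{\xi_1}$, which lies in the $\SpinC$ structure $\mathfrak{k}$; triangles in other $\SpinC$ structures either do not exist for area reasons or represent classes that cannot connect the two distinguished intersection points.

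The main obstacle will be carefully tracking the basepoint and path data through the $2$-handle case within the naturality framework established earlier in the paper, since the original \os\ argument suppresses these. Specifically, we must arrange that a chosen basepoint on $Y_1$, the open book binding, the Heegaard triple, and the cobordism attaching region can all be positioned so that the basepoint survives through the cobordism in a manner compatible with the identifications underlying Theorem \ref{thm:funcconintro} and Theorem \ref{thm:basepointindependence}. This amounts to verifying that $L$ can be chosen disjoint from a neighborhood of the basepoint, and that the natural path across $W$ induces the correct change-of-basepoint isomorphism. Once this naturality is in place, the composition property of cobordism maps assembles the elementary cases into the full statement.
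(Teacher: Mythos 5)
Your overall strategy (decompose into elementary Stein handles, verify for each, assemble via the composition law) is the right skeleton, and your treatment of the $1$-handle case is essentially correct. The gap is in the step where you reduce to a \emph{single} $2$-handle.

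The composition law for Heegaard Floer cobordism maps is not what your reduction tacitly assumes. If $W=W_1\circ W_2$, then
\[
F_{W_1,\mathfrak{k}_1}\circ F_{W_2,\mathfrak{k}_2} \;=\; \sum_{\{\spinc\in\SpinC(W)\,:\,\spinc|_{W_i}=\mathfrak{k}_i\}} F_{W,\spinc},
\]
and the right side is a genuine \emph{sum} over all $\SpinC$ structures restricting correctly, indexed by the image of $\delta\colon H^1(Y)\to H^2(W)$ for the middle level $Y$. Knowing that each elementary piece carries the contact class to the contact class (in the canonical $\SpinC$ structure) and annihilates it in all others therefore only tells you that the \emph{sum} of $F_{W,\spinc}(c(\xi_2))$ over the extensions of $(\mathfrak{k}_1,\mathfrak{k}_2)$ equals $c(\xi_1)$. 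It does not single out $\mathfrak{k}$, nor does it show the other terms vanish; a priori you could have several nonzero contributions that cancel or add up coincidentally. This is exactly the difficulty the theorem is designed to resolve, and it is invisible if you treat the $2$-handles one at a time, because a $\SpinC$ structure on a composite of two or more $2$-handle cobordisms is generally not determined by its restrictions to the pieces. (For $1$-handle cobordisms the issue does not arise, since $H^1(W_i)\to H^1(Y)$ is onto and hence $\delta H^1(Y)=0$, so your $1$-handle reduction is fine.)

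The paper's proof gets around this by grouping \emph{all} the $2$-handles into a single elementary cobordism (all critical points at the same level of the $J$-convex Morse function), and then replacing the per-handle triangle count you propose with a global Lefschetz-fibration argument: cap off the binding to produce a relatively minimal Lefschetz fibration $W_0$ over the annulus, extend its symplectic structure across the $0$-surgery $2$-handle cobordism using Eliashberg's capping theorem, and invoke a uniqueness/isomorphism statement for the induced map in the canonical $\SpinC$ structure of a Lefschetz fibration (Lemma \ref{lem:lfiso}, a variant of \cite[Theorem~5.3]{OSSymp}). That lemma is what lets one identify the unique contributing $\SpinC$ structure as $\mathfrak{k}$ and simultaneously establish the vanishing for all others. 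Your proposed HKM triangle-count argument is plausible in spirit (the paper even flags it as an alternative route in a remark), but as stated it does not address the $\SpinC$-ambiguity across multiple $2$-handles; you would still need either to do all the $2$-handles simultaneously in one Heegaard multi-diagram and control the $\SpinC$ structures of the contributing polygons, or to import something like the Lefschetz-fibration uniqueness lemma. As written, your claim that ``it therefore suffices to establish the theorem for a single Legendrian $2$-handle cobordism'' is where the argument breaks.

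Two smaller points: you omit the Stein $0$-handle case (disjoint union with $S^3$), which is needed for cobordisms whose outgoing end gains new components; and the Eliashberg--Cieliebak re-ordering of critical points in the Stein category (so that handles can be attached in order of index with a common critical value) is a nontrivial input that deserves explicit citation, since the gradient and metric are coupled in the Stein setting and the standard Morse-theoretic rearrangement argument does not apply verbatim.
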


In a weaker form,  such a result follows fairly easily from the existing literature, and was widely known to experts. See Section \ref{sec:Stein} for a discussion. In the present level of specificity, the proof is slightly more involved than one might initially expect, owing largely to the nature of the composition law for cobordism maps in Heegaard Floer theory.     We remark that the incoming and outgoing boundaries of  $W$ are not assumed to be connected, and that  Theorem \ref{thm:naturalityintro} immediately yields a generalization of Plamenevskaya's independence result for contact invariants from \cite{OlgaDistinct}; see Corollary \ref{olgageneralization}.

It would be interesting to know how much naturality of the contact element persists as one weakens assumptions on the cobordism.  It is known, for instance, that the contact element in monopole Floer homology is natural under strong symplectic cobordisms \cite[Theorem 1]{Echeverria}; see also \cite{MrowkaRollin}.  One would  thus expect an affirmative answer to the following:
\begin{question} Is the contact element natural under strong symplectic cobordisms?
\end{question}

Finally, we note that we work with Heegaard Floer homology with $\Z/2\Z$ coefficients throughout.  The naturality results of \cite{JTZ} have been extended to projective $\Z$ coefficients (i.e., $\Z/\pm1$) \cite{Gartner}, but at the moment these extensions have not been established for the graph cobordism maps.  Assuming they will be, the results at hand should immediately extend to the refined setting.  
\\

\noindent{\bf Acknowledgements:} This article stemmed out of the first author's work with Katherine Raoux, discussed at the BIRS workshop ``Interactions of gauge theory with contact and symplectic topology in dimensions 3 and 4", and a particular application of that work which required naturality of the contact class under Stein cobordisms.  It is our pleasure to thank Katherine, BIRS, and the workshop organizers for  inspiring us to write this article. 
\section{Functoriality of the contact class under diffeomorphisms}\label{sec:functoriality}
In this section we clarify the dependence of the contact class on the basepoint used in the definition of Heegaard Floer homology, and highlight how the results of Juhasz-Thurston-Zemke \cite{JTZ} and Zemke \cite{ZemkeGraphCob} couple with \os's argument from \cite{Contact} to imply that the contact class is a well-defined invariant of a {\em pointed} contact 3-manifold, up to pointed isotopy (Theorem \ref{well-defined}).   This invariant is shown to be functorial under pointed contactomorphisms (Theorem \ref{thm:funccon}).  We then show that while, according to \cite{JTZ}, the Heegaard Floer homology group in which the contact element lives depends in an essential way upon the basepoint, the contact invariant is essentially independent from the basepoint (Theorem \ref{thm:basepointindependence}).   Unless otherwise specified, all 3-manifolds are assumed to be compact and oriented, and contact structures assumed to be cooriented.

Recall from \cite{JTZ} that the Heegaard Floer homology group of a pointed 3-manifold $(Y,w)$ is defined as the direct limit of a transitive system of groups and isomorphisms defined by pointed Heegaard diagrams $(\Sigma,\alphas,\betas,w)$ embedded in $(Y,w)$ and pointed Heegaard moves passing between them (together with auxiliary choices of almost complex structures).  See \cite[Theorem 1.5] {JTZ}, and the surrounding discussion.  The contact invariant should therefore be interpreted as an element in the aforementioned direct limit.  As such, it would appear to depend on the basepoint, and we therefore make the following definition  

\begin{defn} A {\em pointed contact 3-manifold}  is a 3-manifold $Y$ equipped with a contact structure $\xi$ and a distinguished basepoint $w$.  
\end{defn}

By Gray's theorem, an isotopy between contact structures is induced by an isotopy of the underlying (compact) 3-manifold, and two contact structures on $(Y,w)$ will be considered equivalent if we can find such an isotopy fixing the basepoint $w$.  

\begin{remark} One could consider a more  restrictive definition of equivalence where the isotopy fixes the contact plane at $w$. This  differs from the present notion only by the choice of oriented plane at $w$, a choice parametrized by a 2-sphere, and would have no effect on our results.  See  \cite[Lemma 2.45]{JTZ} for more details.
\end{remark}

In \cite{Contact}, \os\ defined an invariant of contact structures utilizing the Giroux correspondence between isotopy classes of contact structures on $Y$ and equivalence classes of fibered links in $Y$ under Hopf plumbing. Given a fibered knot representing $\xi$, its knot Floer homology has a distinguished filtered subcomplex in bottommost Alexander grading, whose homology is rank one \cite[Theorem 1.1]{Contact}.  Inclusion of this subcomplex in $\CFa(-Y)$ defines an element $c(\xi)\in \HFa(-Y)$ \cite[Definition 1.2]{Contact}, which they showed does not depend on the particular choice of fibered knot representing $\xi$ \cite[Theorem 1.3]{Contact}.  Absent from the the literature at that time, however, was an understanding of the dependence of the Floer homology group in which $c(\xi)$ resides on the choice of basepoint.  We restate their theorem so that this dependence is explicit, and outline the elements of their proof of invariance which should be refined accordingly.

\begin{theorem}\cite[Theorem 1.3]{Contact}\label{well-defined} Suppose two contact structures $\xi,\eta$ on the pointed 3-manifold $(Y,w)$ are equivalent.  Then $c(\xi,w)=c(\eta,w)\in \HFa(-Y,w)$.
\end{theorem}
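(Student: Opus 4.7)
The plan is to revisit the Ozsv\'ath--Szab\'o proof of invariance from \cite{Contact}, running it inside the naturality framework of \cite{JTZ} and tracking the basepoint $w$ at every step. The input is the pointed form of the Giroux correspondence (to be established as Proposition \ref{functGiroux} in this section): two equivalent pointed contact structures $(Y,\xi,w)$ and $(Y,\eta,w)$ are supported by open books $(B_\xi,\pi_\xi)$ and $(B_\eta,\pi_\eta)$ with $w\notin B_\xi\cup B_\eta$, and these open books are related by a finite sequence of positive Hopf plumbings (and isotopies) that can be performed in the complement of $w$. Once we know this, the theorem reduces to verifying two things: (i) the contact element $c(\xi,w)$ associated to a pointed open book is well-defined as an element of $\HFa(-Y,w)$, independent of the auxiliary data (Heegaard diagram subordinate to the open book, almost complex structure, etc.); and (ii) the class is unchanged by a positive Hopf plumbing performed away from $w$.

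For step (i), I would produce a Heegaard diagram $(\Sigma,\alphas,\betas,w)$ actually embedded in $Y$ and subordinate to the open book in the sense of \cite{Contact}, with $w$ placed on a page of the open book disjoint from $B_\xi$ and from all attaching curves. The distinguished generator $\x_\xi$ coming from the bottommost Alexander filtration level is then a specific intersection point in $\Ta\cap\Tb$, and $c(\xi,w)\in\HFa(-Y,w)$ is its homology class in the diagram-based model. Independence from auxiliary choices (different subordinate Heegaard diagrams for the same pointed open book, complex structures, isotopies of attaching curves) is precisely where one invokes \cite{JTZ}: every Heegaard move connecting two such diagrams can be realized as a \emph{pointed} Heegaard move in $(Y,w)$, hence induces a canonical isomorphism of the corresponding $\HFa(-Y,w)$'s, which one checks sends distinguished generator to distinguished generator by inspecting the explicit triangle/isotopy/stabilization count already carried out in \cite{Contact}.

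For step (ii), suppose $(B',\pi')$ is obtained from $(B,\pi)$ by a positive Hopf plumbing performed in the complement of $w$. Ozsv\'ath--Szab\'o build a subordinate Heegaard diagram for $(B',\pi')$ out of one for $(B,\pi)$ by a controlled stabilization and a handleslide, and identify the new distinguished generator with the image of the old under the triangle map for the handleslide. All of these moves can be arranged to take place in the complement of $w$, since the plumbing is disjoint from $w$ and the basepoint simply rides along as a spectator. Applying \cite{JTZ} once more, the associated maps of Heegaard Floer groups are the canonical continuation isomorphisms in $\HFa(-Y,w)$, and the \cite{Contact} computation that the distinguished generators correspond yields the equality of contact classes in $\HFa(-Y,w)$.

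The hard part is bookkeeping rather than new geometry: one must check that in each of the intermediate constructions --- choosing a Heegaard surface subordinate to an open book, isotoping attaching curves to achieve admissibility, performing the stabilization/handleslide realizing a Hopf plumbing, and interpolating between different subordinate diagrams --- the basepoint $w$ can be kept fixed and disjoint from the $\alphas,\betas$ curves throughout. The invariance under pointed isotopy of $(Y,\xi)$ fixing $w$ is then immediate, since such an isotopy carries a pointed subordinate diagram for $\xi$ to a pointed subordinate diagram for $\eta$, inducing a pointed diffeomorphism of Heegaard data that by \cite{JTZ} acts as the identity on $\HFa(-Y,w)$ and carries the distinguished generator to itself. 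Combined with the two reductions above, this gives $c(\xi,w)=c(\eta,w)$.
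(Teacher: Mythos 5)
Your step (i) and your use of the pointed Giroux correspondence are in line with the paper, but step (ii) misrepresents the structure of the Ozsv\'ath--Szab\'o invariance argument, and this is where the gap lies. They do not show invariance under a single positive Hopf plumbing by relating subordinate Heegaard diagrams via ``a controlled stabilization and a handleslide'' and a triangle map; there is no such direct Heegaard-move argument in \cite{Contact}. Instead their proof factors through two separate lemmas: Lemma~4.1 of \cite{Contact} shows $c(K\#T)=c(K)$ for $T$ the right-handed trefoil (two Hopf bands), via the K\"unneth theorem for knot Floer homology of connected sums, and Lemma~4.4 shows that the element obtained by plumbing $2h$ positive Hopf bands is independent of the choice of plumbings, by realizing that element as the image of a fixed class under a map induced by a $4$-dimensional cobordism diffeomorphic to $Y\times[0,1]$, built from cancelling $1$- and $2$-handles, with the $2$-handle naturality supplied by \cite[Theorem 4.2]{Contact}. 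The conclusion then follows by choosing the plumbings to realize iterated trefoil connected sums.

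Because the argument genuinely passes through a $4$-manifold, the pointed refinement has to handle something your proposal does not address at all: the cobordism maps of \cite{JTZ,ZemkeGraphCob} depend on the embedded path traced by the basepoint through the cobordism, so one must check that the $2$- and $3$-handle attaching regions lie in the complement of the basepoint and that, under the diffeomorphism of the composite cobordism to $Y\times[0,1]$, the basepoint traces the trivial path $w\times[0,1]$. Your description of the basepoint as merely ``riding along as a spectator'' through Heegaard moves in a fixed $Y$ sidesteps exactly this issue. Likewise, the pointed version of Lemma~4.1 requires forming the trefoil connected sum \emph{ambiently}, by embedding the trefoil and its fiber surface in a small ball near $w$ disjoint from the original embedded Heegaard surface, so that the new diagram is a connected sum of embedded diagrams and the K\"unneth comparison can be carried out with embedded pointed Heegaard moves; your proposal omits the trefoil lemma entirely. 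In short, ``bookkeeping rather than new geometry'' is a fair summary only once the two-lemma, cobordism-based skeleton of the \os\ proof is in place, and your step (ii) replaces that skeleton with a Heegaard-move argument that does not appear in \cite{Contact} and would require a separate justification.
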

\begin{proof}   We begin by observing that the  Giroux correspondence \cite{Giroux,EtnyreClay} has the following pointed analogue:
\[\frac{\{\mathrm{(pointed)\ open \ books\ }(L,\pi_L)\mathrm{\ in \ }{(Y^3,w)}\}}{\{\mathrm{(pointed)\ isotopy\ and\ positive\ Hopf\ plumbing}\}} = \frac{\{\mathrm{contact\ structures\ on}\ (Y^3,w)\}}{\{\mathrm{isotopy\ fixing\ w}\}}\]
where we emphasize that on the left-hand side we are considering {\em concrete} open  book decompositions of $Y$, by which we mean an embedded link $L\subset Y$ together with a fibration on its exterior $\pi_L: Y\setminus L\rightarrow S^1$ for which the boundary of the closure of each fiber is $L$. In these terms a pointed open book for the pointed contact manifold is an open book supporting $\xi$ for which the basepoint is contained in $L$.  Two open books are considered equivalent if they differ by a sequence consisting of ambient isotopies of  links  and ambient (de)plumbings with positive Hopf bands, where isotopies and (de)plumbings are required to fix the basepoint.  The pointed statement follows easily from the unpointed statement.  We remark that the inclusion of ambient isotopies of the open book is essential, though typically omitted or implicit in the literature.

\os's  proof  relies on two lemmas. If we denote the element associated to a fibered knot $K\subset Y$   by $c(K)\in \HFa(-Y)$, then \cite[Lemma 4.1]{Contact} states that this element is unchanged under connected summing with the right-handed trefoil, $T$; that is,  $c(K\#T)=c(K)$ for any fibered knot $K\subset Y$.  Then \cite[Lemma 4.4]{Contact} shows that the element associated to a fibered knot obtained by plumbing $2h$ right-handed Hopf bands to $K\subset Y$ is independent of the choice of plumbings.  Using plumbings which realize iterated connected sums with $T$, the result follows.   

The proof of the latter lemma goes by realizing the element associated to any genus $h$ stabilization as the image of fixed class in $\HFa(-Y)$ under a map induced by a cobordism $W$ which is diffeomorphic to $Y\times[0,1]$.  To do this, one observes that a genus $h$ stabilization can be obtained by attaching cancelling 4-dimensional 1- and 2-handles to $Y\times I$, where the former add handles to the page of the open book and the latter enact Dehn twists to the monodromy.  One then uses \cite[Theorem 4.2]{Contact}, which states that the element is carried naturally under the 2-handle cobordisms that add left-handed Dehn twists to the monodromy. 

According to \cite{JTZ}, one should refine these arguments so that they use embedded Heegaard diagrams in $Y$, with the basepoint lying on the embedded Heegaard surface.  When appealing to the functoriality with respect to cobordisms used in the proof of \os's second lemma, one must also be careful about the embedded path in $W\cong Y\times[0,1]$ from the basepoint to itself.

To address the first issue, consider a concrete pointed open book $(K,\pi_K)$ in $Y$ supporting (a contact structure isotopic to) $\xi$, with connected binding.  From this, one can construct \os's  Heegaard diagram adapted to $K$ from \cite[Section 3]{Contact}.  This construction can be done so that the diagram is embedded in $Y$, built from the union of the closure of two fibers by a stabilization, and so that it contains the basepoint.  The proof of naturality for knot Floer homology adapts to produce a functorial invariant from the category of pointed knots to a category whose objects are transitive systems of $\Z$-filtered complexes (where the maps in such systems are certain canonical filtered homotopy classes of filtered homotopy equivalences).   See \cite[Proposition 2.3]{Involutive} for the adaptation of the proof of naturality in  \cite{JTZ}  to the context of  transitive systems of complexes, and \cite[Proposition 2.8]{Adjunction} for a  discussion on how to apply this to   knots.
 Naturality implies that the generator of the homology of the bottommost filtered subcomplex defined by the embedded Heegaard diagram for the knot  $(K,\pi_K)$ produces a well-defined invariant $c(K,w)\in\HFa(-Y,w)$, by consideration of the inclusion-induced map. 
 
 To argue that the element $c(K,w)$ is invariant under connected summing with a trefoil, we observe that one can form the connected sum of the open book with the trefoil knot 
 {\em ambiently}, by embedding the trefoil and its fiber surface in a small ball near the basepoint, but in the complement of the Heegaard surface for $K$.  One can then form an embedded Heegaard diagram adapted to $K\#T$, which is a connected sum of embedded diagrams.  The K{\"u}nneth theorem for the  knot Floer homology of a connected sum \cite[Theorem 7.1]{Knots}, together with the fact that the new diagram is obtained from the initial diagram by a sequence of pointed embedded Heegaard moves, shows that   $c(K\#T,w)=c(K,w)$. 
 
The second lemma from \os's proof also goes through in the context of pointed 3-manifolds and concrete open books. Indeed,   if we are given a concrete pointed open book $(K',\pi')$  for $(Y,w)$ which is obtained from $(K,\pi)$ by ambiently plumbing $2h$ positive Hopf bands, we cancel the additional right-handed Dehn twists in the monodromy by   attaching $2h$ 4-dimensional 2-handles along curves in the page.  This results in a cobordism whose outgoing boundary is diffeomorphic to $Y\#^{2h}S^1\times S^2$.  Further attaching $2h$ 4-dimensional 3-handles to cancel the 2-handles results in a composite cobordism which is diffeomorphic, rel boundary, to $Y\times[0,1]$.  Moreover, since   the attaching regions for the $2$- and $3$- handles lie in the complement of the basepoint, the path traced by the basepoint in the cobordism is sent, under  a diffeomorphism to $Y\times[0,1]$, to the trivial path $w\times[0,1]$.   We can reverse the orientation of $Y$ before performing the aforementioned handle attachments, and the resulting map on the Floer homology of $\HFa(-Y,w)$ is the identity.   The result follows as in \cite{Contact} by appealing to the naturality of the contact invariant under addition of Dehn twists. \end{proof}

\begin{remark}\label{rem:HKMinvariance}  One could also establish pointed invariance of the contact element using the Honda-Kazez-Mati{\'c} perspective \cite{HKMcontact}.  Doing so would likely  establish naturality for the various contact invariants one can define using partial open books for 3-manifolds with convex boundary \cite{HKMcontactsutured}.
\end{remark}

To understand the functoriality of the contact class, we observe that pointed contact 3-manifolds form the objects of a category whose  morphisms are pointed isotopy classes of contactomorphisms.  With respect to this structure, the (pointed) Giroux correspondence is functorial.  To understand this, we make the following definition:

\begin{defn}  A {\em (pointed) diffeomorphism between concrete (pointed) open books} is an orientation-preserving diffeomorphism of pairs $f: (Y,L)\rightarrow (Y',L')$ which intertwines the fibrations on the link complements i.e.  $  \pi_L = \pi_{L'}\circ f_{Y\setminus L}$ (and which maps the  basepoint on $L$ to the basepoint on $L'$).
\end{defn}

If the open book $(L,\pi_L)$ supports a contact structure $\xi_L$ on $Y$, then a diffeomorphic open book $(L',\pi_L')$ supports a contact structure $\xi_{L'}$ on $Y'$ satisfying $f_*(\xi_L)=\xi_{L'}$.  Since the contact structure induced by an open book is only well-defined up to isotopy, we will regard diffeomorphisms of  open books up to  isotopy without loss of information.  In this way, a diffeomorphism of open books defines an isotopy class of contactomorphisms.


Conversely, given an isotopy class of contactomorphisms $f:(Y,\xi)\rightarrow(Y',\xi')$, we can push-forward an open book $(L,\pi_L)$ supporting $\xi$ under $f$,  yielding an open book $(f(L),\pi_L\circ f^{-1})$ supporting $(Y',\xi')$. The evident diffeomorphism of open books induces the given contactomorphism, up to isotopy.  In this way, the Giroux correspondence can be lifted to an isomorphism of  categories:

\begin{prop}[Functorial Giroux Correspondence]\label{functGiroux} There is an isomorphism of categories between the category of (pointed) contact 3-manifolds and (pointed) isotopy classes of contactomorphisms and the {\em concrete open book category}, whose objects are 3-manifolds equipped with concrete (pointed) open books up to ambient (pointed) Hopf plumbing and whose morphisms are (pointed) isotopy classes of (pointed) diffeomorphisms between open books.  
\end{prop}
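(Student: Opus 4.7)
The plan is to construct two functors between the categories explicitly and verify they are mutually inverse. Denote by $\mathcal{C}$ the category of (pointed) contact $3$-manifolds with (pointed) isotopy classes of contactomorphisms, and by $\mathcal{O}$ the concrete open book category. I will build $\Phi: \mathcal{O}\to \mathcal{C}$ and $\Psi: \mathcal{C}\to\mathcal{O}$ using the two directions of the pointed Giroux correspondence already invoked in the proof of Theorem~\ref{well-defined}.

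The functor $\Phi$ on objects sends the equivalence class of a concrete pointed open book $(Y, L, \pi_L, w)$ to $(Y, \xi_L, w)$, where $\xi_L$ is the Thurston--Winkelnkemper contact structure supported by $(L, \pi_L)$. Positive Hopf plumbing preserves the supported contact structure up to isotopy, and these plumbings and isotopies can be performed fixing $w \in L$, so this assignment descends to equivalence classes. On morphisms, given a pointed isotopy class of diffeomorphisms of open books $[f]: (Y, L, \pi_L, w)\to (Y', L', \pi_{L'}, w')$, send it to the pointed isotopy class of $f$ regarded as a contactomorphism $(Y, \xi_L, w)\to(Y', \xi_{L'}, w')$. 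Well-definedness requires that $f_*\xi_L$ is isotopic to $\xi_{L'}$ fixing $w'$; this holds because both are supported by $(L', \pi_{L'})$, and the uniqueness half of Giroux's theorem produces a contact isotopy connecting them which can be arranged to fix a tubular neighborhood of the binding, hence $w'$.

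The functor $\Psi$ chooses, for each pointed contact $3$-manifold $(Y,\xi,w)$, a concrete pointed open book $(L,\pi_L)$ supporting $\xi$ with $w\in L$; its equivalence class in $\mathcal{O}$ is independent of the choice by the pointed Giroux correspondence. On morphisms, given $[f]: (Y,\xi,w)\to(Y',\xi',w')$, push the chosen open book for $(Y,\xi)$ forward to obtain a concrete pointed open book $(f(L), \pi_L\circ f^{-1})$ supporting $\xi'$ with basepoint on the binding; this is equivalent in $\mathcal{O}$ to the chosen open book for $(Y',\xi')$, and $f$ itself furnishes a pointed diffeomorphism of concrete open books. Well-definedness on morphisms follows because (pointed) isotopic contactomorphisms push an open book forward to ambiently (pointed) isotopic open books. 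The verifications $\Phi\circ\Psi \cong \mathrm{id}_{\mathcal{C}}$ and $\Psi\circ\Phi \cong \mathrm{id}_{\mathcal{O}}$ then reduce to the two halves of the pointed Giroux correspondence, since both composite functors act by the identity on underlying $3$-manifolds and underlying diffeomorphisms.

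The main obstacle is the pointed refinement of the classical Giroux correspondence: every isotopy and every Hopf (de)plumbing appearing in the existence and uniqueness statements must be realized by a diffeomorphism fixing $w$. This is handled by placing $w$ on the (nonempty) binding and exploiting the fact that the Thurston--Winkelnkemper model is standard in a tubular neighborhood of the binding, so plumbings and isotopies can be carried out with support in the complement of such a neighborhood. This is precisely the same refinement invoked in the proof of Theorem~\ref{well-defined}, so once that refinement is in hand the remainder of the proof is a direct verification of functoriality and compatibility with composition and identity morphisms.
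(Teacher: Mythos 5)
Your proposal is correct and takes essentially the same approach as the paper. The paper's own (very terse) proof says exactly that the pointed Giroux correspondence gives a bijection on objects and that the preceding discussion (push-forward of open books under contactomorphisms, the observation that a diffeomorphism of open books determines a contactomorphism up to isotopy) gives the bijections on morphism sets; your proposal simply packages those same bijections into explicit mutually inverse functors $\Phi$ and $\Psi$ and spells out the well-definedness checks.
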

\begin{proof} The (pointed) Giroux correspondence yields a bijection between objects which, in one direction, sends an (equivalence class of (pointed)) open book to the ((pointed) isotopy class of a) contact structure supporting it.  The discussion above shows that there are corresponding bijections between morphism sets.
\end{proof}

Using this, we can show that the contact class is functorial with respect to pointed contactomorphisms, Theorem \ref{thm:funcconintro} from the introduction.

\begin{theorem}[Functoriality under contactomorphisms] 
\label{thm:funccon}Suppose $f$ is a pointed contactomorphism  between pointed contact manifolds $(Y,\xi,w)$ and $(Y',\xi',w')$.  Then the map on Floer  homology  $f_*: \HFa(-Y,w)\rightarrow \HFa(-Y',w')$ carries $c(\xi,w)$ to $c(\xi',w')$.  
\end{theorem}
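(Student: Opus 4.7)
The plan is to deduce the theorem from the functorial Giroux correspondence (Proposition~\ref{functGiroux}) together with the naturality of Heegaard Floer homology under pointed diffeomorphisms established in \cite{JTZ}. Begin with a pointed contactomorphism $f: (Y,\xi,w) \to (Y',\xi',w')$. By Proposition~\ref{functGiroux}, choose a concrete pointed open book $(K,\pi_K)$ for $(Y,\xi,w)$ with connected binding containing $w$; then $(f(K), \pi_K \circ f^{-1})$ is a concrete pointed open book for $(Y',\xi',w')$, and the two are identified in the concrete open book category through the morphism represented by the pointed isotopy class of $f$.

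Next, as in the proof of Theorem~\ref{well-defined}, construct the embedded Heegaard diagram $(\Sigma, \alphas, \betas, w)$ in $Y$ adapted to $K$, built canonically from the closures of two fibers of $\pi_K$ together with a stabilization, and arranged so that $w$ lies on $\Sigma$. Because this construction uses only the data $(Y, K, \pi_K, w)$, its image under $f$ is an embedded Heegaard diagram $(f(\Sigma), f(\alphas), f(\betas), w')$ in $Y'$ adapted to $f(K)$. The contact class $c(\xi,w)$ is defined as the image under the inclusion-induced map of the generator of the homology of the bottommost nonempty Alexander-filtered subcomplex of the knot Floer complex associated to $K$ via this diagram; the same construction applied to $f(K)$ and the pushforward diagram produces $c(\xi',w')$.

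Now invoke the naturality results of \cite{JTZ} applied to the pointed diffeomorphism $f$: these identify $f_*: \HFa(-Y,w) \to \HFa(-Y',w')$ with the pushforward between the direct limits corresponding to the two Heegaard diagrams above. At the level of chosen representative diagrams, the pushforward of chains is literally the set-theoretic pushforward under $f$ of generators in the symmetric product, and hence it sends the bottommost filtered subcomplex for $K$ onto the bottommost filtered subcomplex for $f(K)$ and commutes with inclusion into the ambient complex. Taking homology, $f_*$ carries the generator defining $c(\xi,w)$ to the generator defining $c(\xi',w')$, proving the theorem.

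The main obstacle --- really a bookkeeping exercise once Theorem~\ref{well-defined} and Proposition~\ref{functGiroux} are in hand --- is to verify that the well-definedness of $c(\xi,w)$ (independence of the concrete open book chosen, of the stabilization, and of the embedded diagram) interacts compatibly with the transitive systems used to define $\HFa(-Y,w)$, so that comparing the two contact classes across $f_*$ reduces to comparison within a single pair of compatible representatives. This is precisely the content of adapting the naturality of knot Floer homology from \cite{JTZ} to the transitive system of filtered complexes as in \cite[Proposition 2.3]{Involutive} and \cite[Proposition 2.8]{Adjunction}, which was already invoked in the proof of Theorem~\ref{well-defined} and applies verbatim in the present equivariant setting.
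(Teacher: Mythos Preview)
Your proof is correct and follows essentially the same approach as the paper's own proof: both invoke the functorial Giroux correspondence to push forward a concrete open book and its adapted embedded Heegaard diagram via $f$, then appeal to the \cite{JTZ} definition of $f_*$ as pushforward of diagrams to conclude that the cycle representing $c(\xi,w)$ is carried to that representing $c(\xi',w')$. Your version is slightly more explicit about the filtered subcomplex and the bookkeeping with transitive systems, but the argument is the same.
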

\begin{proof} This follows easily from the definition of the map on Floer homology associated to a pointed diffeomorphism, as described in \cite[Section 2.5, Definition 2.42]{JTZ}, together with the functorial Giroux correspondence.  More precisely, according to Section 2.5 of \cite{JTZ}, the map between Floer homology groups associated to a pointed isotopy class of diffeomorphism is defined by the map on transitive systems induced by  pushing forward embedded pointed Heegaard diagrams in $(Y,w)$ (and moves between them) to $(Y',w')$.  A pointed Heegaard diagram adapted to a concrete open book supporting $(Y,\xi)$ is mapped, via $f$, to a pointed Heegaard diagram adapted to a diffeomorphic concrete open book supporting $(Y',\xi')$.   Taking homology of these complexes gives rise to representatives for the direct limit of the transitive systems that define $\HFa(-Y,w)$ and $\HFa(-Y',w')$, respectively.    Under the induced map, the cycle representing the contact element for $c(\xi,w)$ is taken to that representing $c(\xi',w')$.  The result follows.
\end{proof}

Since the hat Floer homology groups depend on the basepoint, the above refinements are necessary in order to understand the invariance of the contact class. Having addressed this, however, we will now show that the contact class is essentially {\em independent} of the basepoint, relying on it only insomuch as it is required to define the group in which the class resides.  

  To explain this, recall that the map  Diff$(Y)\overset{\mathrm{ev}_w}\longrightarrow Y$ which evaluates a diffeomorphism at a basepoint is a Serre fibration, and the fiber over $w$ is  the pointed diffeomorphism group Diff$(Y,w)$.  The associated long exact sequence on homotopy shows that the pointed mapping class group of $Y$ is an extension of the mapping class group by $\pi_1(Y,w)$.   Concretely, this implies that if a pointed diffeomorphism is (unpointed) isotopic to the identity, then it is isotopic to a ``point-pushing map" about a loop representing an element in $\pi_1(Y,w)$.   If one considers instead the fiber of ev$_w$ over a different basepoint, $w'$, we see that any diffeomorphism of $Y$ sending $w$ to $w'$ which is (unpointed) isotopic to the identity is  isotopic through diffeomorphisms sending $w$ to $w'$, to a point-pushing map defined by a choice of arc $\gamma$ from $w$ to $w'$.  Moreover, any two such diffeomorphisms differ, up to pointed isotopy, by a point-pushing map along an element in $\pi_1(Y,w)$.   In light of this, the dependence of Floer homology on the basepoint is captured by a representation $\pi_1(Y,w)\rightarrow $Aut$(\HFa(Y,w))$ defined by isomorphisms associated to isotopy classes of point-pushing diffeomorphisms.  While this representation can be non-trivial, the following proposition  implies that it acts trivially on the subspace spanned by contact elements.

\begin{prop}  \label{prop:istpy}
Suppose that contact structures $\xi$ and $\eta$  on the pointed 3-manifold $(Y,w)$ are isotopic, induced by an isotopy of $Y$ which does not necessarily fix the basepoint.  Then $c(\xi,w)=c(\eta,w)\in \HFa(-Y,w)$
\end{prop}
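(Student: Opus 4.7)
The plan is to reduce the statement to Theorem~\ref{well-defined} by producing, from the given unpointed isotopy, an ambient isotopy of $Y$ fixing $w$ that carries $\xi$ to $\eta$. By Gray's theorem, the hypothesized isotopy of contact structures is realized by an ambient isotopy $\phi_t\colon Y \to Y$ with $\phi_0 = \mathrm{id}_Y$ and $(\phi_1)_*\xi = \eta$. Setting $w' := \phi_1(w)$, the basepoint traces a path $\gamma(t) := \phi_t(w)$ from $w$ to $w'$, so $\phi_1$ is a contactomorphism $(Y,\xi) \to (Y,\eta)$ that in general fails to be pointed.

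To upgrade $\phi_1$ to a pointed contactomorphism I appeal to Proposition~\ref{contactpointpushing}, choosing an isotopy $\psi_t$ through contactomorphisms of $(Y,\eta)$ with $\psi_0 = \mathrm{id}_Y$ such that $t \mapsto \psi_t(w')$ is homotopic rel endpoints to the reverse path $\bar\gamma$; in particular $\psi_1(w') = w$. The composition $\Phi := \psi_1 \circ \phi_1$ is then a pointed contactomorphism $(Y,\xi,w) \to (Y,\eta,w)$. Concatenating $\phi_t$ with the isotopy $t \mapsto \psi_t \circ \phi_1$ exhibits $\Phi$ as the endpoint of an ambient (unpointed) isotopy from $\mathrm{id}_Y$ during which the basepoint traverses the loop $\gamma \cdot \bar\gamma$, which is null-homotopic rel $w$.

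By the Serre fibration argument recalled just before this proposition, a pointed diffeomorphism that is unpointed isotopic to the identity via an isotopy whose basepoint trace is null-homotopic is in fact pointed isotopic to the identity. Letting $\Phi_t$ be such a pointed isotopy, with $\Phi_t(w) = w$ for all $t$ and $\Phi_1 = \Phi$, the family $\xi_t := (\Phi_t)_*\xi$ interpolates between $\xi_0 = \xi$ and $\xi_1 = \Phi_*\xi = (\psi_1)_*(\phi_1)_*\xi = (\psi_1)_*\eta = \eta$ through contact structures on $Y$, realized by an ambient isotopy fixing $w$. Hence $\xi$ and $\eta$ are equivalent as pointed contact structures in the sense of Theorem~\ref{well-defined}, and that theorem yields $c(\xi,w) = c(\eta,w)$. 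The main technical ingredient is Proposition~\ref{contactpointpushing}: without the ability to realize basepoint-pushing through contactomorphisms, the composition $\Phi$ would merely be a diffeomorphism and we could not produce the required Gray-type isotopy through contact structures. An alternative route, avoiding the reduction to Theorem~\ref{well-defined}, would be to apply Theorem~\ref{thm:funccon} directly to $\Phi$ and then argue that $\Phi_*$ acts as the identity on $\HFa(-Y,w)$ using the pointed isotopy to $\mathrm{id}_Y$.
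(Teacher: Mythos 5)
Your proof is correct, but it takes a genuinely different route from the one written in the paper. The paper's proof works directly at the level of Floer homology: it observes that the endpoint diffeomorphism $f$ is pointed isotopic to a point-pushing map $f_\gamma$, so $(f_\gamma)_*(c(\xi,w))=c(\eta,w)$ by Theorem~\ref{thm:funccon}, and then invokes Zemke's formula $(f_\gamma)_*=\mathrm{Id}+(\Phi_w)_*\circ(A_\gamma)_*$ together with the observation that contact classes lie in the kernel of the $H_1(Y)/\mathrm{Tor}$ action (the filtration-preserving, degree-shifting argument) to conclude that $(f_\gamma)_*$ fixes $c(\xi,w)$. You instead straighten the endpoint at the contact-geometric level: you compose with a Reeb-flow contactomorphism from Proposition~\ref{contactpointpushing} chosen so that the combined basepoint trace is null-homotopic, conclude via the Serre-fibration discussion that the composite $\Phi$ is pointed isotopic to the identity, and thereby reduce to Theorem~\ref{well-defined}. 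Both routes are flagged as available in the paper's introduction (in the discussion following Theorem~\ref{thm:basepointindependence}); the paper opts for the Zemke-formula route because it yields the stronger structural fact that contact classes are annihilated by the $H_1/\mathrm{Tor}$ action, which is of independent interest, whereas your route is more elementary and stays within contact geometry plus Theorem~\ref{well-defined}. Two small caveats. First, your argument leans on Proposition~\ref{contactpointpushing}, which is stated later in the paper; its proof is self-contained so there is no circularity, but this ordering should be noted. Second, you use slightly more than the bare statement of that proposition, namely that the isotopy can be taken through contactomorphisms with basepoint trace homotopic to a prescribed path $\bar\gamma$; that is available from the Reeb-flow construction in its proof, but it is worth saying explicitly since the statement only asserts existence of a contactomorphism isotopic to the identity taking $w'$ to $w$.
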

\noindent We note that the functoriality of the contact invariant only implies that $f_*(c(\xi,w))=c(\eta,w)$, where $f$ is the endpoint of the isotopy.

\begin{proof}  Let $\phi_t:Y\times[0,1]\rightarrow Y$ denote the isotopy carrying $\xi$ to $\eta$, where $\phi_0=$Id$_Y$ and $\phi_1=f$ is a diffeomorphism fixing $w$, but where $\phi_t$ may not fix the basepoint for $0<t<1$.  The discussion preceding the proposition indicates that $f$ is isotopic to point-pushing map along a curve $\gamma$ representing an element $[\gamma]\in \pi_1(Y,w)$.  More precisely, a loop $\gamma$ based at $w$ can be regarded as an isotopy of embeddings of a point into $Y$ which, by the isotopy extension theorem, can be extended to an isotopy of $Y$ which is the identity outside a neighborhood of the image of $\gamma$.  The endpoint of this latter isotopy is a pointed diffeomorphism $f_\gamma: (Y,w)\rightarrow (Y,w)$ whose pointed isotopy class depends only on the homotopy class  $[\gamma]\in \pi_1(Y,w)$,  by another application of the isotopy extension theorem (or, rather its interpretation in terms of the homotopy lifting property of the  map Diff$(M)\rightarrow$Diff$(N,M)$ which evaluates a diffeomorphism at a submanifold, see \cite{Palais,Lima}).  According to the main theorem of  \cite{JTZ}, there is an induced automorphism $(f_\gamma)_*$ of the Floer homology group $\HFa(Y,w)$, and the functoriality of the contact class under pointed contactomorphisms  implies \[(f_\gamma)_*(c(\xi,w))=c(f_{\gamma*}(\xi),w)=c(\eta,w)\]  The automorphism $(f_\gamma)_*$ will, in general, be non-trivial; indeed, Zemke shows that it can be computed via the formula \cite[Theorem D]{ZemkeGraphCob}:
\[ (f_\gamma)_*= \mathrm{Id} + (\Phi_w)_*\circ (A_\gamma)_*\]
where $A_\gamma$ is the chain level map defining the $H_1(Y)/$Tor action on $\HFa(Y,w)$, and $\Phi_w$ is the basepoint action which, in the case of $\CFa(Y,w)$ counts $J$-holomorphic disks which pass through the hypersurface specified by the basepoint exactly once.   The proposition will follow if we can show that contact classes are in the kernel of the $H_1(Y)/$Tor action.   But this is an easy consequence of their definition.  Letting $c\in H_*(\Filt(-Y,K,w,\mathrm{bot}))\cong \F$ denote the generator of the homology of the bottommost non-trivial filtered subcomplex in the filtration of $\CFa(-Y,w)$ induced by the binding of a pointed open book supporting $\xi$,  the contact class is defined as
\[  c(\xi,w):=  \iota_*(c)\]
where $\iota: \Filt(-Y,K,w,\mathrm{bot})\hookrightarrow \CF(-Y,w)$ is the inclusion map.   The chain map $A_\gamma$  on $\CFa(-Y,w)$ respects the filtration induced by $K$, defined as it is by counting $J$-holomorphic disks which avoid $w$ (see \cite[Proof of Proposition 5.8]{Adjunction}).  It follows that $A_\gamma$ maps $\Filt(-Y,K,w,\mathrm{bot})$ to $\Filt(-Y,K,w,\mathrm{bot})$, but since it shifts the relative $\Z/2\Z$ homological grading, and the homology of the latter subcomplex is one-dimensional, the map on homology must be trivial.  Therefore the automorphism on Floer homology induced by a point-pushing map acts as the identity on any contact elements, and we have $c(\xi,w)=(f_\gamma)_*(c(\xi,w))=c(\eta,w)$ as claimed.
\end{proof}

The above proposition can be used to show that change-of-basepoint maps on Floer homology induced by pushing points along arcs act on contact elements in a canonical way i.e.  $(f_\gamma)_*(c(\xi,w))\in \HFa(Y,w')$ is independent from the choice of arc used to construct a diffeomorphism $f_\gamma:(Y,w)\rightarrow (Y,w')$. This indicates an independence of the contact class from the choice of basepoint. We can make this independence more precise.   To do this, we show that the point-pushing maps along arcs can be refined to pointed contactomorphisms.


\begin{prop}{\em(cf. \cite{Hatakeyama})}\label{contactpointpushing}
Given $w,w'\in Y$, there exists a contactomorphism $\phi:(Y,\xi)\to (Y,\xi)$, which is isotopic to the identity and maps $w$ to $w'$.
\end{prop}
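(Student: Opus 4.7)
The plan is to build $\phi$ as the time-one map of a compactly supported contact isotopy $\{\phi_t\}_{t\in[0,1]}$ with $\phi_0=\Id$ and $\phi_1(w)=w'$; the path $\{\phi_t\}$ then automatically provides a smooth isotopy from $\Id$ to $\phi$ through contactomorphisms.

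First I would reduce to a local statement. Since $Y$ is connected, choose a smooth embedded path $\gamma\colon[0,1]\to Y$ from $w$ to $w'$ and cover its image by finitely many Darboux balls $U_1,\dots,U_N$, ordered along $\gamma$ so that consecutive balls overlap. Pick intermediate points $w=w_0,w_1,\dots,w_N=w'$ with each consecutive pair $w_{i-1},w_i$ lying in a common $U_i$. It then suffices to produce, inside each $U_i$, a compactly supported contact isotopy from $\Id$ sending $w_{i-1}$ to $w_i$, and to compose the resulting contactomorphisms.

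The key technical input is that contact vector fields span $T_pY$ at every $p\in Y$. Fix a contact form $\alpha$ with $\ker\alpha=\xi$ and Reeb field $R$, and recall that smooth functions $H\colon Y\to\R$ are in bijection with contact vector fields $X_H$ via $\alpha(X_H)=H$ and $\iota_{X_H}d\alpha=dH(R)\alpha-dH$. Evaluating at $p$ yields the decomposition $X_H(p)=H(p)R(p)+v_H(p)$ with $v_H(p)\in\xi_p$ determined by $dH|_{\xi_p}$ through the linear isomorphism $\xi_p\to\xi_p^*$ induced by $d\alpha|_{\xi_p}$. The assignment sending a one-jet $j^1_pH$ to $X_H(p)$ is therefore surjective onto $T_pY$, and multiplying $H$ by a bump function supported in a Darboux chart around $p$ realizes any prescribed $v\in T_pY$ as $X_H(p)$ for a compactly supported contact vector field.

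With this in hand, I would finish by an open-orbit argument. The group $G$ of time-one maps of compactly supported contact isotopies of $(Y,\xi)$ consists of contactomorphisms isotopic to $\Id$. If $p$ lies in the orbit $G\cdot w$, the previous paragraph shows that the map $v\mapsto\phi^{X_{H_v}}_1(p)$, where $H_v$ depends linearly on $v$ with $X_{H_v}(p)=v$, has surjective differential at $v=0$, so $G\cdot w$ is open in $Y$. Orbits of a group action partition $Y$ into open subsets, and connectedness forces $G\cdot w=Y$, producing the desired $\phi$. The main obstacle is the surjectivity of $j^1_pH\mapsto X_H(p)$; once this is verified directly from the Hamiltonian formulas above, the rest is formal.
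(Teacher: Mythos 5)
Your argument is correct, but it proceeds by a genuinely different route than the paper's.  The paper's proof is a direct construction: after perturbing a path $\gamma$ from $w$ to $w'$ to be transverse to $\xi$, it invokes the standard neighborhood theorem for transverse arcs to model a tubular neighborhood of $\gamma$ on $(\R^3, \ker(dz+r^2d\theta))$ with $\gamma$ mapping to a segment of the $z$-axis, then extends the pulled-back model form to a global contact form $\beta$ on $Y$ whose Reeb flow carries $w$ to $w'$ in unit time; the time-one Reeb flow of $\beta$ is the desired contactomorphism.  You instead prove the \emph{local transitivity} of the contactomorphism group: the contact Hamiltonian correspondence $H\mapsto X_H$ (with $\alpha(X_H)=H$, $\iota_{X_H}d\alpha=dH(R)\alpha-dH$) shows that contact vector fields realize every tangent vector at every point, and then a standard open-orbit argument gives transitivity on a connected $Y$.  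Both approaches are valid.  The paper's buys an explicit contactomorphism and stays close to classical contact neighborhood theory; yours is the more general ``flexibility'' template (it would transparently extend, say, to showing $n$-point transitivity, or to higher dimensions) at the cost of producing a less concrete map.  Two minor remarks: the reduction to Darboux balls in your first paragraph is never actually used, since the open-orbit argument in the last paragraph already works globally; and the compact support hypothesis is automatic here since $Y$ is closed, though it would matter if one wanted the statement for manifolds with boundary.
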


\begin{proof}
Let $\gamma:[0,1]\to Y$ denote a smooth embedded path from $w$ to $w'$. After a $\mathcal{C}^\infty$-small isotopy we may assume the path $\gamma$ is transverse to $\xi$. Let $\nu(\gamma([0,1]))$ denote a neighborhood of the transverse arc. A standard neighborhood theorem gives a contact embedding
\[\phi: (\nu (\gamma ([0,1])), \xi)\to (\mathbb{R}^3, ker(\alpha)),\text{ where } \alpha = dz+r^2d\theta, \]
which takes the image of the arc to the segment $\{(0,0)\}\times [0,1]$ along the $z$-axis; in particular $\phi(w) = (0,0,0)$ and $\phi(w')=(0,0,1)$.

Let $\beta$ denote a contact 1-form for $\xi$ which is an extension of $\phi^*\alpha$. The time one flow of the Reeb vector field $R_\beta$ is then the desired contactomorphism taking $w$ to $w'$.
\end{proof}

\begin{cor} The contact class is independent of the basepoint in the following sense: given two  basepoints $w,w'\subset Y$ a path $\gamma$ between them induces an isomorphism $\gamma_*:\HFa(-Y,w)\rightarrow \HFa(-Y,w')$.  For any choice of $\gamma$, the contact class satisfies $\gamma_*(c(\xi,w))=c(\xi,w')$.
\end{cor}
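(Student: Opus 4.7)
The plan is to combine three ingredients already in hand: the functoriality of the contact class under pointed contactomorphisms (Theorem \ref{thm:funccon}), the existence of a point-pushing contactomorphism (Proposition \ref{contactpointpushing}), and the computation carried out in the proof of Proposition \ref{prop:istpy}, which shows that the automorphism of $\HFa(-Y,w)$ induced by any point-pushing diffeomorphism acts trivially on contact classes.

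First, I would produce a privileged path from $w$ to $w'$. By Proposition \ref{contactpointpushing} there exists a contactomorphism $\phi:(Y,\xi)\to(Y,\xi)$ smoothly isotopic to the identity and carrying $w$ to $w'$; a fixed choice of such an isotopy determines a smooth path $\gamma_\phi:[0,1]\to Y$ from $w$ to $w'$. Within the naturality framework of \cite{JTZ}, the map $\phi_*$ induced on Floer homology by the pointed diffeomorphism $\phi:(Y,w)\to(Y,w')$ coincides with the change-of-basepoint isomorphism $(\gamma_\phi)_*:\HFa(-Y,w)\to\HFa(-Y,w')$ assigned to $\gamma_\phi$ by the fundamental-groupoid representation; this identification is exactly what is implicitly invoked in the proof of Proposition \ref{prop:istpy}. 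Since $\phi$ is a pointed contactomorphism, Theorem \ref{thm:funccon} then gives
\[(\gamma_\phi)_*(c(\xi,w)) \;=\; \phi_*(c(\xi,w)) \;=\; c(\xi,w').\]

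Second, I would compare an arbitrary path $\gamma$ from $w$ to $w'$ with $\gamma_\phi$. Any two such paths differ, up to homotopy rel endpoints, by a loop $\delta$ based at $w$, and the functoriality of the fundamental-groupoid representation lets us write $\gamma_* = (\gamma_\phi)_*\circ \delta_*$. The desired equality therefore reduces to the claim $\delta_*(c(\xi,w))=c(\xi,w)$. But $\delta_*$ is the automorphism $(f_\delta)_*$ induced by a point-pushing diffeomorphism along $\delta$, and the computation carried out in the proof of Proposition \ref{prop:istpy}, namely Zemke's formula $(f_\delta)_* = \mathrm{Id} + (\Phi_w)_*\circ (A_\delta)_*$ together with the fact that $A_\delta$ preserves the bottommost filtered subcomplex defining $c(\xi,w)$ while shifting the relative $\Z/2\Z$ grading on its one-dimensional homology, gives exactly this.

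I expect the main obstacle to be the identification in the first step of the pointed-diffeomorphism map $\phi_*$ with the change-of-basepoint map $(\gamma_\phi)_*$. This is the piece most dependent on the technical apparatus of \cite{JTZ} rather than on contact geometry; it expresses the compatibility of naturality for isotopic diffeomorphisms with the fundamental-groupoid representation on $\HFa$. Everything else is a straightforward application of the groupoid composition law together with the already-established triviality of the loop action on contact classes.
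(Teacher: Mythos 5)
Your argument is correct, but it is a more roundabout route than the paper's. The paper notices that Proposition \ref{contactpointpushing}, whose proof produces a contactomorphism out of \emph{any} embedded arc from $w$ to $w'$ transverse to $\xi$, can be applied to the \emph{given} path $\gamma$ itself (after a $\mathcal{C}^\infty$-small perturbation making it embedded and transverse, which does not change the homotopy class rel endpoints). The resulting pointed contactomorphism is then, by construction, a representative of the pointed isotopy class of the point-pushing map $f_\gamma$, and Theorem \ref{thm:funccon} gives $\gamma_*(c(\xi,w)) = c(\xi,w')$ immediately; Zemke's formula and the kernel-of-$A_\gamma$ argument from Proposition \ref{prop:istpy} play no role. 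Your detour through a single ``privileged'' path $\gamma_\phi$, followed by writing $\gamma_* = (\gamma_\phi)_* \circ \delta_*$ and re-deploying the Proposition \ref{prop:istpy} computation to kill $\delta_*$ on contact classes, is valid but reproves material already established; it also makes the proof depend on the heavier graph-cobordism input when the contact-geometric statement of Proposition \ref{contactpointpushing} alone suffices. The ``main obstacle'' you flag---identifying $\phi_*$ with $(\gamma_\phi)_*$---is indeed the crux, and it is the same point the paper makes: via the fibration $\mathrm{Diff}(Y)\to Y$, a diffeomorphism isotopic to the identity and sending $w$ to $w'$ is pointed-isotopic to the point-pushing map along the trace of $w$ under the isotopy; this holds just as well for $f_\gamma$ as for your $\gamma_\phi$, which is why the paper never needs to privilege a particular path.
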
 

\begin{proof}
Suppose $\gamma$ is a path connecting $w$ to $w'$. As in the proof of Proposition \ref{prop:istpy}, the homotopy class of $\gamma$ gives rise to a well-defined pointed isotopy class of pointed diffeomorphism $f_\gamma:(Y,w)\to (Y,w')$, whose associated isomorphism between Floer homology groups we denote $\gamma_*$. The contactomorphism constructed using $\gamma$ in the proof of the preceding proposition is a representative of this pointed isotopy class. Functoriality of the invariant under pointed contactomorphisms, Theorem \ref{thm:funccon}, then implies that $\gamma_* (c(\xi,w))=c(\xi,w')$.
\end{proof}

\noindent {\bf Proof of Theorem \ref{thm:basepointindependence}.}  In light of the corollary, if we let the subgroup of $\HFa(-Y,w)$ spanned by contact elements be denoted $cHF(-Y,w)$, we obtain a transitive system (in the sense of \cite[Definition 6.1]{EilenbergSteenrod}) of groups indexed by points in $Y$, for which the isomorphism $f_{w,w'}:cHF(-Y,w)\rightarrow cHF(-Y,w')$ is the map on Floer homology associated to the point-pushing map along {\em any} arc from $w$ to $w'$.  We call the direct limit of this transitive system the {\em contact subgroup} associated to $Y$, and denote it $cHF(-Y)$:
\[ cHF(-Y):= \underset{\longrightarrow}{\mathrm{lim}} \ cHF(-Y,w).\]   The corollary shows that it is well-defined, independent of any choice of basepoint, and that a contact structure $\xi$ on $Y$ receives an associated element $c(\xi)\in cHF(-Y)$ defined as the image of  $c(\xi,w)\in cHF(-Y,w)$  under the canonical inclusion-induced isomorphism $cHF(-Y,w)\rightarrow cHF(-Y)$.   The contact subgroup is functorial with respect to (unpointed) diffeomorphisms of $Y$ by the main theorem of \cite{JTZ}, and an (unpointed)  contactomorphism $f:(Y,\xi)\rightarrow (Y',\xi')$ sends $c(\xi)$ to $c(\xi')$ by Theorem \ref{thm:funccon}.

\section{Functoriality of the contact class under Stein cobordisms}\label{sec:Stein}
In this section we provide a proof of the well-known folk theorem that the \os\ contact invariant is natural with respect to Stein cobordisms. Non-triviality of the contact invariant of a Stein fillable contact structure was proved in \cite[Theorem 1.5]{Contact}.  The proof relied on a naturality result \cite[Theorem 4.2]{Contact} for the  invariants of contact structures represented by open book decompositions which differ by a single Dehn twist.  This latter result implicitly showed that the contact invariant is natural with respect to the Stein cobordisms associated to a Weinstein 2-handle attachment along a Legendrian knot, a fact made more clear  in \cite[Theorem 2.3]{Lisca1} (though stated there in terms of contact $+1$ surgery, and summed over all $\SpinC$ structures on the cobordism) and then explicit in \cite[Proposition 3.3]{Ghiggini1}.  Together with a calculation for $1$-handles, the above results yield a weak naturality of the contact invariant under Stein cobordisms, where one sums over all $\SpinC$ structures.  This is spelled out in \cite[Theorem 11.24]{JuhaszCob}, under an additional  topological restriction on the $1$-handles.  The $\SpinC$ refinement of naturality for the contact invariant of a Stein filling, viewed as a Stein cobordism to the standard structure on the 3-sphere, was established in  \cite[Theorem 4]{OlgaDistinct}.   Given the body of literature on topological aspects of Stein surfaces and domains, exposed beautifully in \cite{EliashbergCieliebakBook,EliashbergStein,Gompf}, essentially the only pieces missing for the $\SpinC$ refinement of naturality under a general Stein cobordism is a discussion of $1$-handles, particularly those with feet in different path components, and a careful discussion of the composition of multiple $2$-handles.   

Recall, then, that a Stein cobordism from a contact $3$-manifold $(Y_1,\xi_1)$ to $(Y_2,\xi_2)$ is a smooth $4$-manifold $W$ with $\partial W=-Y_1\cup Y_2$, oriented by a complex structure $J$ for which the oriented complex lines of tangency on $\partial W$ agree with $\xi_1$ and $\xi_2$, respectively, and which admits a {\em $J$-convex Morse function} $\phi$, defined by the requirement that $-dd^\mathbb{C}\phi=\omega_\phi$ is symplectic.   Such a manifold comes equipped with a Liouville vector field, $X_\phi$, defined as the gradient of $\phi$ with respect to the metric induced by $\omega_\phi$.  See \cite{EliashbergCieliebakBook} for an introduction. 

\begin{theorem}\label{thm:naturality}  Suppose $(W,J,\phi)$ is a Stein cobordism from a contact 3-manifold $(Y_1,\xi_1)$ to a contact 3-manifold $(Y_2,\xi_2)$.  Then \[F_{\Wdual,\mathfrak{k}}(c(Y_2,\xi_2))=c(Y_1,\xi_1),\]
where  $W$ is viewed as a cobordism from $-Y_2$ to $-Y_1$, and $\mathfrak{k}$  is the canonical $\SpinC$ structure associated to $J$.  Moreover 
\[F_{\Wdual,\spinc}(c(Y_2,\xi_2))=0\]
for $\spinc\ne \mathfrak{k}$.
\end{theorem}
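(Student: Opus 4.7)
My plan is to decompose the Stein cobordism into elementary Weinstein handles, prove naturality on each piece, and reassemble via the composition law for Heegaard Floer cobordism maps. By Eliashberg's existence theorem for Stein structures \cite{EliashbergStein,EliashbergCieliebakBook}, I may homotope $(J,\phi)$ through Stein structures so that $\phi$ is self-indexing with critical points of index at most two, and so that all index-$1$ critical points lie strictly below all index-$2$ ones. This decomposes $W=W_1\cup_{(Y_1',\xi_1')} W_2$, with $W_1$ a union of Stein $1$-handles and $W_2$ a union of Stein $2$-handles, where $(Y_1',\xi_1')$ is cut out by an appropriate level set. Because the canonical $\SpinC$ structure $\mathfrak{k}$ is intrinsic to the almost-complex structure $J$, which restricts to each subcobordism, $\mathfrak{k}|_{W_i}=\mathfrak{k}_{W_i}$.

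For $W_2$, the elementary case of a single Weinstein $2$-handle attached along a Legendrian $\Lambda$ with framing $\mathrm{tb}(\Lambda)-1$ is \cite[Theorem 4.2]{Contact} in the $\SpinC$-refined form of \cite[Theorem 2.3]{Lisca1}, \cite[Proposition 3.3]{Ghiggini1}, and \cite[Theorem 4]{OlgaDistinct}: the dual cobordism map in the canonical $\SpinC$ structure sends the contact class to the contact class, and vanishes on the contact class for any other $\SpinC$ structure. Iterating this gives $F_{\Wdual_2,\mathfrak{k}_{W_2}}(c(Y_2,\xi_2))=c(Y_1',\xi_1')$ and $F_{\Wdual_2,\spinc_2}(c(Y_2,\xi_2))=0$ for $\spinc_2\neq\mathfrak{k}_{W_2}$, provided the inductive step properly tracks the $\SpinC$ refinement through composition of consecutive $2$-handles---the technical point flagged in the introduction.

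For $W_1$, I would argue by induction on the number of $1$-handles. A single Stein $1$-handle attachment produces either a contact self-connect-sum with $(S^1 \times S^2, \xi_{\mathrm{std}})$ or a contact connect sum merging two boundary components. In each case, choose a pointed open book for the incoming contact manifold whose binding avoids the attaching balls, and form its ambient connect sum with the standard open book on $(S^1\times S^2,\xi_{\mathrm{std}})$ (or with the open book of the other component) to obtain a compatible open book for the outgoing contact manifold. The K\"unneth theorem for knot Floer homology \cite[Theorem 7.1]{Knots}, applied to the corresponding adapted Heegaard diagram, identifies the outgoing contact class as the tensor product of the incoming contact class with $\Theta^+$ under the appropriate tensor decomposition of $\HFa(-Y_1')$. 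Reversing orientation, $\Wdual_1$ is a composition of $4$-dimensional $3$-handles, whose maps in the canonical $\SpinC$ structure project onto $\Theta^+$ in each $S^1\times S^2$ factor (or act as the identity on the tensor splitting in the merging case), sending the outgoing contact class to the incoming one; for any non-canonical $\SpinC$ structure on $W_1$, the restriction to at least one $S^1\times S^2$ summand of $Y_1'$ is non-torsion, so the map factors through a vanishing group.

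The main obstacle is the assembly step. The composition law for cobordism maps reads
\[ F_{\Wdual_1,\spinc_1}\circ F_{\Wdual_2,\spinc_2}=\sum_{\substack{\spinc\in\SpinC(\Wdual)\\ \spinc|_{\Wdual_i}=\spinc_i}} F_{\Wdual,\spinc}. \]
For $(\spinc_1,\spinc_2)=(\mathfrak{k}_{W_1},\mathfrak{k}_{W_2})$, the left side applied to $c(Y_2,\xi_2)$ equals $c(Y_1,\xi_1)$ by the previous steps, while the right side is a sum over $\SpinC$ structures on $\Wdual$ extending the canonical pair---a torsor over the image of $\delta\colon H^1(Y_1')\to H^2(\Wdual)$ from Mayer--Vietoris. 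The heart of the problem is isolating $F_{\Wdual,\mathfrak{k}}$ as a single summand. I would establish uniqueness of the extension by computing $c_1(\mathfrak{k})$ from the Stein structure via Gompf's formula \cite{Gompf}, in terms of rotation numbers of the attaching Legendrian circles and the topology of the $1$-handle cores, and comparing it to any competing extension on spanning classes of $H_2(\Wdual;\Z)$ obtained by gluing relative chains across $Y_1'$. With uniqueness in hand, the canonical formula follows, and the vanishing $F_{\Wdual,\spinc}(c(Y_2,\xi_2))=0$ for $\spinc\neq\mathfrak{k}$ follows from the same composition formula applied with $(\spinc_1,\spinc_2)=(\spinc|_{\Wdual_1},\spinc|_{\Wdual_2})$: at least one factor is non-canonical, annihilating the left side by the previous steps, while uniqueness collapses the right side to the single term $F_{\Wdual,\spinc}(c(Y_2,\xi_2))$, which must therefore vanish.
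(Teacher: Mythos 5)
Your overall strategy---decompose into Stein handles by index, prove naturality for each piece, and reassemble via the composition law---is the same as the paper's. The $0$- and $1$-handle discussion, while sketchier than the paper's (which uses the graph cobordism maps of Zemke and the connected-sum formula of Hendricks-Manolescu-Zemke to merge components), is essentially sound. There is, however, a genuine gap in your treatment of the $2$-handle subcobordism $W_2$ that the paper explicitly flags and then resolves in a way you do not.

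You propose to ``iterate'' Ghiggini's single-$2$-handle naturality, conceding parenthetically that this works ``provided the inductive step properly tracks the $\SpinC$ refinement through composition of consecutive $2$-handles,'' and then turn your attention to the assembly of $W_1$ with $W_2$. But the latter assembly is actually unproblematic: a $\SpinC$ structure on $W_1\cup_{Y_1'}W_2$ is uniquely determined by its restrictions, because $Y_1'$ is a connected sum of $Y_1$ with copies of $S^1\times S^2$ and the restriction $H^1(W_1)\to H^1(Y_1')$ is surjective, making $\delta H^1(Y_1')=0$ by exactness in Mayer--Vietoris. Your proposed computation of $c_1(\mathfrak{k})$ via rotation numbers is therefore unnecessary. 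The real obstruction sits \emph{inside} $W_2$: if $W_2=W_2'\cup_{Y'}W_2''$ is a composition of two $2$-handle subcobordisms, then $\delta H^1(Y')$ can be nontrivial, so a $\SpinC$ structure on $W_2$ is \emph{not} in general determined by its restrictions, and the composition law gives only
\[
F_{W_2'',\mathfrak{k}''}\circ F_{W_2',\mathfrak{k}'} = \sum_{\spinc|_{W_2'}=\mathfrak{k}',\,\spinc|_{W_2''}=\mathfrak{k}''} F_{W_2,\spinc}
\]
with possibly many terms, none of which is isolated by the inductive hypothesis. Naturality for each factor does not recover naturality for the composite in the $\SpinC$-refined sense. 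This is precisely the point the paper makes before the $2$-handle lemma, and it is why iteration fails.

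The paper's resolution is to treat \emph{all} the Stein $2$-handles simultaneously: choose an open book for $(Y_1,\xi_1)$ whose page contains the full Legendrian link $L$, pass to the fibered manifolds $Y_i'$ obtained by $0$-surgery on the binding, observe that the $2$-handle cobordism $W_0$ between them is a Lefschetz fibration over the annulus, and then use the capping-off results of \cite{EliashbergFilling} and \cite{OSSymp} to show (Lemma~\ref{lem:lfiso}) that among $\SpinC$ structures evaluating as $2-2g$ on the fiber, only the canonical one contributes and that its map is an isomorphism. Your proposal lacks an analogue of this mechanism to isolate the canonical $\SpinC$ structure on a multi-$2$-handle cobordism, so the central claim remains unproved.
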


\begin{remark} The result is equally valid for Weinstein cobordisms, which \cite{EliashbergCieliebakBook} shows are equivalent to Stein cobordisms for the present purposes.
\end{remark}

\begin{remark} Strictly speaking, the Stein cobordism should be equipped with a properly embedded graph, in the sense of \cite{ZemkeGraphCob}.   In this context, the graph is obtained from the basepoints present on the incoming end of the cobordism by their image under the flow of the Liouville vector field.  We pick basepoints on the incoming ends which flow to the outgoing ends, with  some extra care taken in the case that components of the boundary merge via Stein 1-handles so that all components of the boundary have a single basepoint (see Lemma \ref{lemma:1handle} below).  In light of the naturality results from the previous section, and the resulting independence of the contact class of the choice of basepoint, we can safely omit basepoints from most of the discussion and obtain a naturality result for the contact invariant which is basepoint independent.
\end{remark}

\begin{proof} By \cite[Theorem 1.3.3]{EliashbergStein}, the cobordism can be decomposed as a composition of elementary cobordisms corresponding to Stein 0-, 1-, and 2-handle attachments, with the latter two attached along framed points and Legendrian curves, respectively.  In this dimension, the subtleties involved with $2$-handle framings was clarified by  \cite{Gompf}.  Though we could avoid it with a more cumbersome inductive argument, we can and will assume that the  attachments are ordered by their indices, arising from a self-indexing plurisubharmonic Morse function.  For a smooth manifold, this follows from the standard rearrangement theorem for Morse functions \cite[Theorem 4.8]{Milnor}.  The proof of that theorem, however, modifies the gradient-like vector field for the Morse function so that stable manifold of an index $\lambda$ critical point is disjoint from the unstable manifold of an index $\lambda'\ge \lambda$ critical point (achieving the Morse-Smale condition for the manifolds associated to these critical points).  In the Stein setting, the gradient vector field and metric are coupled, and one cannot vary one without changing the other.  Hence rearranging critical levels is more subtle.   These subtleties are nicely exposed, and dispatched with, in Chapter 10 of \cite{EliashbergCieliebakBook}.  Of particular relevance are Proposition 10.10 and 10.1.  Proposition 10.10 allows one to vary the critical values of the J-convex Morse function specifying the handle decomposition, provided the stable and unstable manifolds of the points of interest are disjoint, the Stein analogue of \cite[Theorem 4.1]{Milnor}.    Proposition 10.1 allows one to vary an isotropic submanifold of a given contact type hypersurface by an isotropic isotopy compatible with a family of J-convex Morse functions, the Stein analogue of \cite[Lemma 4.7]{Milnor}.  Applying the latter to the attaching spheres of the Stein handles allows us to assume, as in the classical case, that the stable manifold of an index $\lambda$ critical point is disjoint from the unstable manifold of an index $\lambda'\ge \lambda$ critical point.  Thus we can proceed by induction to order the handles and further ensure that all critical points of a given index have the same critical value.    The existence of such an ordering for  a 2-dimensional Stein domain (a Stein cobordism with $Y_1=\emptyset$) is implicit in the statement of \cite[Theorem 1.3]{Gompf}, a result which itself is attributed as implicit in Eliashberg \cite{EliashbergStein}.

We assume then, that the cobordism is decomposed as a sequence of elementary 0- and  1-handle cobordisms, followed by a cobordism  associated to a  collection of Weinstein 2-handle attachments along Legendrian curves, equipped with a J-convex Morse function with a unique critical value.   We will show that the contact invariant is mapped in the specified way under a single 0- or 1-handle attachment, and similarly for a simultaneous collection of Stein 2-handle attachments.  The result will then follow from the composition law for cobordism-induced maps on Heegaard Floer homology.   

The fact that the contact invariant is natural under Stein 0-handle attachment follows immediately from the definition of the associated map on Floer homology, which is simply the map induced by the canonical isomorphism between Heegaard Floer chain complexes under taking disjoint union with a Heegaard diagram whose surface is a pointed 2-sphere with  no curves \cite[Section 11.1]{ZemkeGraphCob}.  This definition, together with  the fact that the contact class of the Stein fillable contact structure on the $3$-sphere is non-trivial, yield, upon taking duals, the stated naturality. 
 
We now show that contact invariant is natural with respect to Stein 1-handle cobordisms.

\begin{lemma}\label{lemma:1handle}  Suppose $(W,J)$ is the cobordism associated to a Stein 1-handle attachment.  Then Theorem \ref{thm:naturality} is true for $F_\Wdual$.
\end{lemma}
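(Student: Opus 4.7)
The plan is to split into two cases based on the topology of the Stein $1$-handle.  If the two feet lie in the same component of $Y_1$, the handle attachment produces $(Y_2,\xi_2) = (Y_1,\xi_1)\#(S^1\times S^2,\xi_{\mathrm{std}})$, where $\xi_{\mathrm{std}}$ is the unique (up to isotopy) Stein fillable contact structure on $S^1\times S^2$.  If the feet lie in different components $(Y_1^a,\xi_1^a)$ and $(Y_1^b,\xi_1^b)$ of $Y_1$, the result is the contact connected sum $(Y_1^a\# Y_1^b,\xi_1^a\#\xi_1^b)$.  Both identifications follow directly from the local model for a Stein $1$-handle and the fact that the contact structure on the outgoing boundary is canonical.

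The argument then rests on two ingredients.  First, the K{\"u}nneth formula for the contact invariant under contact connected sum gives $c(\xi\#\xi') = c(\xi)\otimes c(\xi')$ under the K{\"u}nneth splitting of $\HFa$ in the appropriate $\spinc$ structure, a contact-geometric consequence of \cite[Theorem 7.1]{Knots}.  Second, $\Wdual$ is a standard $3$-handle cobordism from $-Y_2$ to $-Y_1$, and the classical \os\ description of the $3$-handle map in Heegaard Floer identifies the induced map with the K{\"u}nneth identification $\HFa(-Y_1^a\#-Y_1^b)\to \HFa(-Y_1^a)\otimes\HFa(-Y_1^b)$ in the separating case, and with $\mathrm{id}\otimes \pi$ in the non-separating case, where $\pi\colon \HFa(-(S^1\times S^2),\spinc_0)\cong \F_{(1/2)}\oplus \F_{(-1/2)} \to \F$ is the distinguished projection used in the definition of the $3$-handle map.

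To finish, I would verify that $c(\xi_{\mathrm{std}})\in \HFa(-(S^1\times S^2),\spinc_0)$ is precisely the generator on which $\pi$ is nonzero; this follows from nontriviality of the contact invariant of a Stein fillable structure \cite[Theorem 1.5]{Contact} together with the computation of the absolute $\Q$-grading of the contact element.  Combining these ingredients yields $F_{\Wdual,\mathfrak{k}}(c(Y_2,\xi_2)) = c(Y_1,\xi_1)$.  The vanishing of $F_{\Wdual,\spinc}$ on $c(Y_2,\xi_2)$ for $\spinc\neq \mathfrak{k}$ is a cohomological check: a $4$-dimensional $1$-handle does not contribute to $H^2(W;\Z)$, so there is a unique $\spinc$ structure on $W$ extending both $\spinc_{\xi_1}$ on $-Y_1$ and $\spinc_{\xi_2}$ on $-Y_2$, namely $\mathfrak{k}$.

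The main subtlety I anticipate is compatibility with Zemke's graph cobordism framework used throughout the section: one must arrange the basepoint path induced by the Liouville flow so that it traverses the $1$-handle region in a way that makes the resulting graph cobordism map agree with the classical $3$-handle map on the relevant $\spinc$ summand.  In the non-separating case this also requires a careful choice of basepoints on the two ends of the merging component so that the resulting graph is connected and matches the formulas of \cite{ZemkeGraphCob}.  Once these compatibilities are in place, the proof proceeds as outlined.
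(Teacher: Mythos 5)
Your high-level strategy matches the paper's: split into the two topological cases for the $1$-handle, invoke multiplicativity of the contact invariant under contact connected sum, and reduce the non-separating case to a calculation verifying that $c(S^1\times S^2,\xi_{\mathrm{std}})$ picks out the distinguished generator of $\HFa(-(S^1\times S^2),\spinc_0)$. The paper also does exactly this, citing the product formula from \cite{tbbounds} for multiplicativity and pointing to \cite[Proof of Proposition 5.19]{Adjunction} for the $S^1\times S^2$ computation, and it addresses the $\SpinC$ uniqueness for $1$-handle cobordisms via an $H^1$ surjectivity argument later in the section.

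However, there is a genuine gap in your treatment of the separating case (feet in different components). You appeal to ``the classical \os\ description of the $3$-handle map'' and claim it identifies $F_{\Wdual}$ with the K\"unneth identification $\HFa(-Y_1^a\#-Y_1^b)\to\HFa(-Y_1^a)\otimes\HFa(-Y_1^b)$. But the original \os\ construction of cobordism maps in \cite{HolDiskFour} requires connected incoming and outgoing $3$-manifolds: a $3$-handle whose coattaching sphere separates its component is simply not defined there. You correctly flag Zemke's graph cobordism framework as needing to be reconciled, but this reconciliation is not a peripheral compatibility check---it is the load-bearing input. The paper instead invokes \cite[Proposition 5.2]{HMZconnectedsum}, which computes the graph cobordism map for exactly this $1$-handle (with its natural strong-ribbon trivalent graph induced by the Liouville flow) and shows it is the connected-sum chain homotopy equivalence; combined with multiplicativity of the contact class, this gives the result. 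Without that computation, the separating case of your proof does not close. Your non-separating case, by contrast, is fine as outlined, and is essentially identical to the paper's: use \os's $1$-handle map (or its $3$-handle dual), the K\"unneth splitting, and the pinning-down of $c(\xi_{\mathrm{std}})$ via its $\Q$-grading.
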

\begin{proof}  Unlike the case of a Stein domain, a Stein cobordism can have disconnected boundary.   Thus, there are two possibilities (1) the feet of the 1-handle lay in different components of $Y_1$, the incoming boundary of $W$, or (2) the feet of the 1-handle lay in the same component of $Y_1$.  In the former, we may assume without loss of generality that  there only two components of $Y_1$, since Floer homology is manifestly multiplicative under disjoint unions (i.e. groups and homomorphisms associated to disjoint unions of 3-manifolds and cobordisms, respectively, are tensor products), and product cobordisms map contact invariants naturally according to the previous section.\footnote{Here, a product cobordism means a 4-manifold diffeomorphic to $Y\times I$, through a diffeomorphism induced by the flow of the Liouville vector field.  Since the ``holonomy" diffeomorphism from the outgoing boundary to the incoming boundary  \cite[Definition 9.40]{EliashbergCieliebakBook} is a contactomorphism, the naturality results of the previous section indicate the contact invariants are mapped in the specified way. }  Similarly, for the second possibility, we may assume $Y_1$ is connected.

Naturality for 1-handles that connect two components is a consequence of a calculation for the graph cobordism map of the 1-handle cobordism, endowed with a trivalent (strong ribbon) graph that merges the two basepoints in the incoming components to a single basepoint in their outgoing connected sum.  This calculation is the content of \cite[Proposition 5.2]{HMZconnectedsum}, which indicates that such a graph cobordism induces a chain homotopy equivalence, and the complex associated to a connected sum is therefore homotopy equivalent to the tensor product of the complexes associated to the factors. (This calculation reproved, in a functorial way, \ons's earlier connected sum formula \cite[Theorem 6.2]{HolDiskTwo}.)  As the the contact invariant is similarly multiplicative under contact connected sums \cite[Product formula]{tbbounds}, the claimed naturality is immediately implied.   Here, we should point out that the trivalent graph arises naturally from the Stein structure, as the descending manifold of the index one critical point of $\phi$ with respect to $X_\phi$, union a flowline of $X_\phi$ from the critical point to the outgoing boundary.

The case of 1-handles with feet on the same component of the incoming boundary is simpler and, in this case, follows from \os's definition of the 1-handle map \cite[Section 4.3]{HolDiskFour}, together again with the fact that the contact invariant is multiplicative under contact connected sums.  In this case, the outgoing manifold is contactomorphic to the connected sum $(Y\#(S^1\times S^2),\xi\#\xi_{std})$, so it suffices to show that the image of the dual of $c(\xi)$ under  \os's map induced by the 1-handle agrees with the dual $c(\xi\#\xi_{std})$.  But the 1-handle map sends $c(\xi)^*$ to  $c(\xi)^*\otimes \Theta_+$.  Thus the problem is reduced to a single calculation, verifying that the dual of the contact class of the standard contact structure on $S^1\times S^2$ satisfies $\Theta_+=(c(S^1\times S^2,\xi_{std}))^*\in \HFa(S^1\times S^2)$.  This calculation can be done in numerous ways; see,  e.g. \cite[Proof of Proposition 5.19]{Adjunction}, for an explicit treatment. \end{proof}

We now turn to 2-handles.  While the naturality statement given here was proved for the cobordism associated to a {\em single} Stein 2-handle by Ghiggini, this together with naturality under the 0- and 1-handles is not enough to yield the desired result.  Indeed, if $W=W_1\circ W_2$ is a composition of Stein cobordisms, the composition law for cobordism maps states that:

\[ F_{W_1,\mathfrak{k}_1}\circ F_{W_2,\mathfrak{k}_2} = \underset{\{\spinc\in \SpinC(W)\ | \ \spinc|_{W_i}=\mathfrak{k}_i\}}{\sum} F_{W,\spinc}.\]

Assuming naturality of the invariant for $W_1$ and $W_2$ does not, then, imply either of the conclusions in the statement of the theorem when $\SpinC$ structures on $W$ are not uniquely determined by their restrictions to $W_1$ and $W_2$.  The following standard lemma makes this precise

\begin{lemma} Suppose $W=W_1\cup_{Y} W_2$ is a 4-manifold glued along a 3-manifold $Y$ arising as a connected component of $\partial W_i$ (with boundary orientation of $Y$ different for $i=1,2$).  Then the set of Spin$^c$ structures on $W$ restricting to $\spinc_i\in \SpinC(W_i)$, provided it is non-empty, is in affine correspondence with $\delta H^1(Y)$, where $\delta$ is the connecting homomorphism in the Mayer-Vietoris sequence.
\end{lemma}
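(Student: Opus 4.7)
The plan is to exploit the fact that $\SpinC$ structures on any $4$-manifold form a torsor over the second cohomology group, and to translate the condition of agreeing with prescribed $\SpinC$ structures on the pieces $W_1,W_2$ into a cohomological kernel condition that Mayer--Vietoris computes explicitly.

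First I would fix a reference $\spinc_0\in\SpinC(W)$ satisfying $\spinc_0|_{W_i}=\spinc_i$ (this is precisely the nonemptiness hypothesis in the statement). Then every other $\spinc\in\SpinC(W)$ can be written uniquely as $\spinc=\spinc_0+\alpha$ with $\alpha\in H^2(W)$, and the restriction formula gives $\spinc|_{W_i}=\spinc_i+\alpha|_{W_i}$. Consequently $\spinc|_{W_i}=\spinc_i$ for $i=1,2$ if and only if $\alpha$ lies in the kernel of the joint restriction map
\[
r\colon H^2(W)\longrightarrow H^2(W_1)\oplus H^2(W_2),\qquad \alpha\mapsto(\alpha|_{W_1},\alpha|_{W_2}).
\]
Thus the set in question is an affine copy of $\ker r\subset H^2(W)$.

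Next I would invoke the Mayer--Vietoris sequence for the decomposition $W=W_1\cup_Y W_2$, a portion of which reads
\[
H^1(W_1)\oplus H^1(W_2)\longrightarrow H^1(Y)\xrightarrow{\ \delta\ } H^2(W)\xrightarrow{\ r\ } H^2(W_1)\oplus H^2(W_2).
\]
Exactness at $H^2(W)$ identifies $\ker r$ with the image of the connecting homomorphism, namely $\delta H^1(Y)$. Combining this with the preceding paragraph exhibits the required affine identification of the fiber of the restriction map over $(\spinc_1,\spinc_2)$ with $\delta H^1(Y)$.

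There is essentially no obstacle here beyond bookkeeping: the only point to check carefully is that the $H^2$-torsor structure on $\SpinC$ is natural under inclusion of a codimension-zero submanifold, so that the affine translate of $\spinc_0$ by $\alpha$ restricts as $\spinc_i+\alpha|_{W_i}$. Given that naturality, the statement is immediate from exactness of Mayer--Vietoris, and no orientation subtlety arises beyond the one already noted in the hypothesis (that $Y$ inherits opposite boundary orientations from $W_1$ and $W_2$, which is exactly what makes the Mayer--Vietoris decomposition glue correctly).
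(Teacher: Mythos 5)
Your proof is correct and is the standard argument; the paper does not in fact supply a proof of this lemma (it labels it a ``standard lemma'' and moves on), so your torsor-plus-Mayer--Vietoris reasoning is exactly what the authors are implicitly invoking. The only point worth underscoring, which you did note, is that the affine $H^2$-action on $\SpinC$ structures is natural for the restriction maps, so that the condition $\spinc|_{W_i}=\spinc_i$ for $i=1,2$ translates precisely to $\alpha\in\ker\bigl(H^2(W)\to H^2(W_1)\oplus H^2(W_2)\bigr)$, which exactness identifies with $\delta H^1(Y)$.
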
 

In particular, if either $W_i$ is a cobordism associated to a 1-handle attachment, then a $\SpinC$ structure on $W$ is uniquely determined by its restrictions to the pieces.   This is because a 1-handle cobordism has the property that the restriction $H^1(W_i)\rightarrow H^1(Y)$ is surjective, which implies $\delta H^1(Y)$ is trivial, by exactness.    Therefore, we can treat 1-handles individually.  It is certainly possible, however, that a $\SpinC$ structure on a 4-manifold composed of two or more 2-handles will not be determined by its restrictions, and therefore we cannot appeal directly to the naturality of $c(\xi)$ with respect to Stein 2-handles.

The following lemma, together with Ghiggini's argument for a single Stein 2-handle, will be used to establish naturality under Stein cobordisms which are built of multiple 2-handles:

\begin{lemma}{\em (cf. \cite[Theorem 5.3]{OSSymp})} 
\label{lem:lfiso}
 Let $\pi:W\to [0,1]\times S^1$ be a relatively minimal Lefschetz fibration over the annulus, viewed as a cobordism from $Y_1$ to $Y_2$, whose fiber $F$ has genus $g>1$. Then there is a unique Spin$^c$ structure $\mathfrak{s}$ over $W$ for which 
\[
\langle c_1(\mathfrak{s}),[F]\rangle= 2-2g
\]
and the induced map
\[
F_{W,\mathfrak{s}}^+: HF^+ (Y_1, \mathfrak{s}|_{Y_1})\to HF^+ (Y_2, \mathfrak{s}|_{Y_2})
\]
is nontrivial. This is the canonical Spin$^c$ structure $\mathfrak{k}$, and its associated map is an isomorphism.
\end{lemma}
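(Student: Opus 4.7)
The plan is to adapt the strategy behind Ozsv\'ath--Szab\'o's Theorem 5.3 of their symplectic paper, which treats Lefschetz fibrations over the disk, to the present annular cobordism setting. First, I verify the $c_1$ condition for the canonical $\SpinC$ structure $\mathfrak{k}$: since $\mathfrak{k}$ is induced by a compatible almost-complex structure $J$ on $W$ for which the fiber $F$ is $J$-holomorphic with trivial normal bundle, the adjunction formula yields $\langle c_1(\mathfrak{k}), [F]\rangle = \chi(F) = 2-2g$.

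Next, I decompose $W$ into elementary pieces by slicing $[0,1]\times S^1$ with vertical circles disjoint from the critical values and separating them into singletons. This expresses $W$ as a composition $W_1 \cup \cdots \cup W_n$, in which each $W_i$ is either a trivial surface bundle cobordism or a single-critical-value Lefschetz cobordism; the latter is a Weinstein $2$-handle attachment along the associated vanishing cycle, while the former is a product of a mapping torus with an interval. Every intermediate $3$-manifold in this decomposition is fibered over $S^1$ with fiber $F$, and the key input is Ozsv\'ath--Szab\'o's fiberedness detection theorem: for such a fibered $3$-manifold $Y$, the extremal Floer group $\HFp(Y, \mathfrak{s})$ with $\langle c_1(\mathfrak{s}),[F]\rangle = 2-2g$ is one-dimensional on the canonical $\SpinC$ structure induced by the fibration and vanishes on every other extremal $\SpinC$ structure.

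I then treat each elementary piece. A trivial surface bundle cobordism induces an isomorphism on top Floer groups via functoriality of $\HFp$ under the realizing diffeomorphism. For a single-handle Lefschetz piece $W_i$, Ghiggini's naturality result for the contact invariant under a Weinstein $2$-handle cobordism, combined with the identification (via fiberedness) of the one-dimensional extremal $\HFp$ with the contact invariant of the open book supported by the fibration, shows that $F^+_{W_i, \mathfrak{k}|_{W_i}}$ is nontrivial, hence an isomorphism between rank-one spaces. A competing $\SpinC$ structure on $W_i$ with the same $c_1\cdot[F]$ evaluation must either restrict non-canonically to a boundary fibered manifold---whence its map vanishes since the target $\HFp$ is zero---or coincide with $\mathfrak{k}|_{W_i}$.

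Finally, I assemble via the composition formula for cobordism maps. Any $\SpinC$ structure $\mathfrak{s}$ on $W$ satisfying $\langle c_1(\mathfrak{s}),[F]\rangle = 2-2g$ that contributes nontrivially must restrict canonically to every intermediate fibered $3$-manifold, since otherwise the composite factors through a vanishing extremal Floer group; together with the elementary analysis, this forces $\mathfrak{s}$ on each $W_i$ to be canonical, and hence $\mathfrak{s} = \mathfrak{k}$. The composition of isomorphisms on the elementary pieces then delivers the isomorphism $F^+_{W, \mathfrak{k}}$. The main obstacle lies in this final step: a $\SpinC$ structure on $W$ is not in general determined by its restrictions to the $W_i$, and it is precisely the fiberedness detection of $\HFp$, applied at every intermediate slice of the decomposition, that rules out ``phantom'' extensions and pins down $\mathfrak{k}$ as the unique contributor.
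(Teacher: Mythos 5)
Your decomposition strategy stalls at the assembly step. The phrase ``the composite factors through a vanishing extremal Floer group'' conflates $F^+_{W,\mathfrak{s}}$ with the composite $F^+_{W_2,\mathfrak{s}|_{W_2}}\circ F^+_{W_1,\mathfrak{s}|_{W_1}}$; the composition law only identifies that composite with a \emph{sum} $\sum_{\mathfrak{s}'} F^+_{W,\mathfrak{s}'}$ over all $\mathfrak{s}'\in\SpinC(W)$ restricting to $\mathfrak{s}|_{W_i}$ on $W_i$. When the intermediate group $\HFp(Y,\mathfrak{s}|_Y)$ vanishes you learn only that the sum vanishes, not that each summand does. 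Moreover, every $\mathfrak{s}'$ in this sum has the \emph{same} restriction as $\mathfrak{s}$ to $Y$, to each $W_i$, and hence to every intermediate fibered slice: the $\mathfrak{s}'$ differ from $\mathfrak{s}$ by elements of $\delta H^1(Y)\subset H^2(W)$, which by exactness of Mayer--Vietoris restrict trivially to $W_1$ and $W_2$. So fiberedness detection applied at intermediate slices cannot distinguish among them, contrary to your closing claim that it rules out ``phantom'' extensions. The same ambiguity undercuts the inference ``$\mathfrak{s}|_{W_i}$ canonical for all $i$, hence $\mathfrak{s}=\mathfrak{k}$'': the set of $\SpinC$ structures on $W$ with prescribed restrictions is an affine space over $\delta H^1(Y)$, which is generically nontrivial for a Lefschetz fibration whose vanishing cycles obstruct surjectivity of $H^1(W_i)\to H^1(Y)$ (this is exactly the point of the Mayer--Vietoris lemma recorded in the paper just before the present one).

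The paper's proof sidesteps this $\SpinC$ ambiguity entirely by \emph{not} slicing the annular Lefschetz fibration. Instead it caps off: by \cite[Theorem~2.2]{OSSymp}, $Y_2$ bounds a Lefschetz fibration $W'$ over the disk with fiber $F$, so $V=W\cup W'$ is a Lefschetz fibration over the disk. One application of the composition law to $V-B^4 = (W'-B^4)\circ W$, together with two applications of \cite[Theorem~5.3]{OSSymp} (to $W'-B^4$ and to $V-B^4$), pins down $\mathfrak{k}_V$ as the unique extremal contributor and identifies each map as an isomorphism, whence $\mathfrak{s}=\mathfrak{k}_V|_W=\mathfrak{k}$. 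In effect the paper delegates the hard $\SpinC$ bookkeeping to the already-proven disk case rather than attempting to reprove it elementwise, which is precisely the step your argument cannot close.
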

\begin{proof}
Suppose that $\mathfrak{s}\in$ Spin$^c(W)$ is as in the statement of the lemma, and induces a non-trivial map. We will show that $\mathfrak{s}$ is the canonical Spin$^c$ structure $\mathfrak{k}$ on $W$.
By \cite[Theorem 2.2]{OSSymp} $Y_2$ bounds a 4-manifold $W'$ admitting a  Lefschetz fibration over the disk with fiber identified with $F$. Let $V= W\cup W'$. $V$ admits a Lefschetz fibration over the disk obtained by extending $\pi$. 

The composition law for cobordism maps states that 
\begin{equation}
F_{W' - B^4, \mathfrak{k}' }^+\circ F_{W, \mathfrak{s} }^+ = \underset{\{\mathfrak{t} \in \SpinC(W)\ | \ \mathfrak{t}|_{W} = \mathfrak{s},\  \mathfrak{t}|_{W'-B^4} = \mathfrak{k}'\}}{\sum} F_{V-B^4,\mathfrak{t}}^+ 
\end{equation}
where $\mathfrak{k}'$ is the canonical Spin$^c$ structure on $W'$.
By \cite[Theorem 5.3]{OSSymp} the map
\[
F_{W' - B^4, \mathfrak{k}' }^+ : HF^+ (Y_2, \mathfrak{k}'|_{Y_2}) \to HF^+ (S^3)
\]
is an isomorphism.  Similarly, \cite[Theorem 5.3]{OSSymp} implies that there is a unique non-trivial contribution to the sum on the right-hand side, coming from the canonical Spin$^c$ structure $\mathfrak{k}_V$ on $V$, and $F_{V-B^4,\mathfrak{k}_V}^+$ is an isomorphism.  Since $\mathfrak{k}_V$ restricts to the canonical Spin$^c$ structure $\mathfrak{k}$ on $W$, it follows that $\mathfrak{s}=\mathfrak{k}$, and the corresponding map is an isomorphism.
\end{proof}

With this in hand, we modify the argument of \cite[Lemma 2.11]{Ghiggini1} to establish naturality with respect to Stein 2-handles, which boils down to another application of the composition law, together with a theorem of Eliashberg:
\begin{lemma}
Suppose $(W,J)$ is the cobordism associated to a collection of Stein 2-handle attachments. Then Theorem \ref{thm:naturality} is true for $F_{W^{\dagger}}$.
\end{lemma}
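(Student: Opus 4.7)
The plan is to modify the argument from \cite[Lemma 2.11]{Ghiggini1}, decomposing $W$ into a composition of single Stein 2-handle cobordisms, applying Ghiggini's single-handle naturality to each piece, and then combining the composition law with Lemma \ref{lem:lfiso} to collapse the resulting sum over $\SpinC$ structures to the canonical term.

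Concretely, the self-indexing ordering of critical points established above factors $W$ as $W = W_n \cup \cdots \cup W_1$, where each $W_i : (Y_{i-1}, \xi_{i-1}) \to (Y_i, \xi_i)$ is the cobordism of a single Weinstein 2-handle attached along a Legendrian curve, with $Y_0 = Y_1$ and $Y_n = Y_2$. Ghiggini's naturality \cite[Proposition 3.3]{Ghiggini1} for a single Stein 2-handle gives $F_{W_i^{\dagger}, \mathfrak{k}_i}(c(\xi_i)) = c(\xi_{i-1})$ for the canonical $\SpinC$ structure $\mathfrak{k}_i$ on $W_i$, with vanishing on all other $\SpinC$ structures. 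Iterating and invoking the composition law yields
\[\sum_{\substack{\mathfrak{s} \in \SpinC(W) \\ \mathfrak{s}|_{W_i} = \mathfrak{k}_i \text{ for all } i}} F_{W^{\dagger}, \mathfrak{s}}(c(\xi_2)) = c(\xi_1),\]
and the canonical $\SpinC$ structure $\mathfrak{k}$ on $W$ appears on the left, since it restricts to $\mathfrak{k}_i$ on each $W_i$.

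To collapse this sum to the single term $\mathfrak{s} = \mathfrak{k}$, and to establish the vanishing for $\SpinC$ structures outside the restriction pattern above, I would next invoke the theorem of Eliashberg \cite{EliashbergStein} (see also \cite{EliashbergCieliebakBook,Gompf}) to realize $W$ as the total space of a Lefschetz fibration. Choosing open books for $(Y_1, \xi_1)$ and $(Y_2, \xi_2)$ in which the Legendrian attaching curves of $W$ lie on pages, and then performing sufficiently many positive Hopf plumbings of the bindings---which alter neither the Stein diffeomorphism type of $W$ nor the contact structures $\xi_1, \xi_2$, and which can be arranged to occur away from the 2-handle attaching regions---one obtains a Lefschetz fibration $\pi : W \to [0,1] \times S^1$ whose fiber $F$ has genus $g \geq 2$. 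Lemma \ref{lem:lfiso}, transported to $\widehat{HF}$ and to the dual cobordism $W^{\dagger}$, then gives $F_{W^{\dagger}, \mathfrak{s}} \equiv 0$ on $\widehat{HF}(-Y_2)$ for every $\mathfrak{s} \neq \mathfrak{k}$. This simultaneously reduces the sum above to $F_{W^{\dagger}, \mathfrak{k}}(c(\xi_2)) = c(\xi_1)$ and yields the vanishing assertion for non-canonical $\SpinC$ structures.

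The main obstacle will be adapting Lemma \ref{lem:lfiso}, phrased for $HF^+$ with $W$ in its natural orientation, to a vanishing statement for $\widehat{HF}$ in the direction of the reversed cobordism $W^{\dagger}$. This is expected to follow from compatibility of cobordism maps with the connecting homomorphism in the long exact sequence relating $\widehat{HF}$ and $HF^+$, combined with the duality between the Floer theories of $Y$ and $-Y$; both routes reduce the needed vanishing to the $HF^+$ statement of Lemma \ref{lem:lfiso}. A lesser subtlety is verifying that the stabilizations securing the Lefschetz fibration structure can be implemented without disturbing the identification of the canonical $\SpinC$ structures $\mathfrak{k}_i$ on the elementary pieces $W_i$.
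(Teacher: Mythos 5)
The central gap is in the second paragraph: $W$ itself does not carry a Lefschetz fibration \emph{with closed fibers}, so Lemma \ref{lem:lfiso} cannot be applied to $W$. When you place the Legendrian link on a page of an open book for $(Y_1,\xi_1)$ and perform Legendrian surgery, what you obtain is a Lefschetz fibration whose fibers are the pages, i.e.\ surfaces \emph{with boundary}; the binding sweeps out a component of the vertical boundary of $W$. Lemma \ref{lem:lfiso} (following \cite[Theorem 5.3]{OSSymp}) is a statement about Lefschetz fibrations with closed fibers, and its proof relies on capping the fibration off to a closed Lefschetz pencil. Since $Y_1$ and $Y_2$ are arbitrary contact $3$-manifolds rather than mapping tori of closed surfaces, no amount of stabilization can produce a closed-fiber Lefschetz fibration structure on $W$. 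The paper's proof circumvents this by first capping off the binding: it introduces the trace cobordisms $V_i$ of zero-surgery on the binding, carrying each $Y_i$ to a genuine fibered $3$-manifold $Y_i'$, and observes that the Legendrian surgery induces a cobordism $W_0$ from $Y_1'$ to $Y_2'$ which \emph{is} a closed-fiber Lefschetz fibration over the annulus. One then works with the auxiliary cobordism $X=W\cup V_2\cong V_1\cup W_0$, uses \cite[Theorem 1.1]{EliashbergFilling} to extend the symplectic structure on $W_0$ over $V_1$ and identify canonical $\SpinC$ structures, applies the composition law to each of the two factorizations of $X$, and finally invokes Lemma \ref{lem:lfiso} for $W_0$ together with Ozsv\'ath--Szab\'o's characterization of $c(\xi_i)$ as the image of the fibered class $c(\pi_i)$ under $F^+_{V_i^\dagger}$.

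A secondary issue: your first paragraph, iterating Ghiggini's single-handle naturality through the composition law, gives only a \emph{sum} relation indexed by $\SpinC$ structures on $W$ restricting to a fixed pattern on the $W_i$. If two distinct $\SpinC$ structures on $W$ restrict identically to every $W_i$ (which happens as soon as $\delta H^1$ of some intermediate level is nonzero), the composition law yields only the vanishing of their sum, not of each summand. So even the vanishing assertion for $\SpinC$ structures with some $\mathfrak{s}|_{W_i}\ne\mathfrak{k}_i$ requires the Lefschetz-fibration argument (applied to $W_0$, not $W$). Once that argument is in place it handles the $\SpinC$ bookkeeping all at once, making the one-handle-at-a-time iteration unnecessary; this is why the paper attaches all the $2$-handles simultaneously, at a single critical level.
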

\begin{proof}
As detailed above, we assume that all critical points of the plurisubharmonic Morse function on $W$ have the same critical value, or equivalently that $(W,J)$ is constructed by attaching a Stein 2-handle along each component of a Legendrian link $L$ in $(Y_1,\xi_1)$. 

We may choose an open book decomposition adapted to $\xi_1$ such that the Legendrian link $L$ sits naturally in a page. After positively stabilizing the open book we may assume that the pages have connected boundary and are of genus greater than one. 

For $i\in\{1,2\}$ let $V_i$ denote the trace cobordism from $Y_i$ to $Y_i '$, the 3-manifold obtained by performing zero surgery along the binding of the open book. Surgery along $L$ gives rise to the cobordism $W$ from $Y_1$ to $Y_2$, and a cobordism $W_0$ from $Y_1'$ to $Y_2'$. Note that both $Y_1'$ and $Y_2'$ are fibered 3-manifolds with fiber $F$ obtained by capping off the boundary component of a page of the open book.
$W_0$ admits a Lefschetz fibration over the annulus with fiber $F$.

Let $X = W\cup V_2 \cong V_1\cup W_0$ denote the cobordism from $Y_1$ to $Y_2'$. By
\cite[Theorem 1.1]{EliashbergFilling} we may extend the symplectic structure 
induced by the Lefschetz fibration on $W_0$ over the 2-handle cobordism $V_1$, giving a symplectic structure $\omega$ on $X$. The restriction of $\omega$ to $W$ agrees with the symplectic structure on $W$ induced by the Legendrian surgery along $L$; in particular the canonical Spin$^c$ structure $\mathfrak{k}_X\in $ Spin$^c(X)$ of $\omega$ restricts to the canonical Spin$^c$-structure $\mathfrak{k}$ of $(W,J)$.

Since $V_1$ can be obtained from surgery along a homologically non-trivial curve in $Y_1'$, restriction induces an isomorphism $H^2(X,\mathbb{Z})\to H^2(W_0,\mathbb{Z})$, so every Spin$^c$-structure on $W_0$ admits a unique extension over $X$. In particular, the extension of the canonical Spin$^c$-structure $\mathfrak{k}_0\in$  Spin$^c (W_0)$ is $\mathfrak{k}_X$. For $i\in \{1,2\}$, let $\mathfrak{t}_i =\mathfrak{k}_0 |_{Y_i '}$.

Ozsv\'ath and Szab\'o \cite{Contact} characterize the contact invariant $c(\xi_i)\in HF^+ (-Y_i, \mathfrak{s}_{\xi _i})$ as the image of a class $c(\pi_i)\in HF^+(-Y_i ',\mathfrak{t}_i )$ associated to the fibration under the map $F^+ _{V_i ^\dagger,\mathfrak{p}_i}$ where $\mathfrak{p}_i\in$ Spin$^c(V_i)$ is the unique extension of $\mathfrak{t}_i$. Let $\mathfrak{s}\in$ Spin$^c(W)$, then

\[
F_{W^{\dagger},\mathfrak{s}}^+ (c(\xi_2)) = F_{W^{\dagger},\mathfrak{s}}^+ \circ F^+_{V_2 ^{\dagger}}(c(\pi_2)) = \underset{\{\mathfrak{s}_X \in \SpinC(X)\ | \ \mathfrak{s}_X|_{W} = \mathfrak{s},\  \mathfrak{s}_X|_{V_2} = \mathfrak{p}_2\}}{\sum} F_{X^{\dagger},\mathfrak{s}_X}^+ (c(\pi_2)),
\]
where the last equality is given by the composition law for cobordism maps. Because every Spin$^c$-structure on $W_0$ admits a unique extension over $X$, another application of the composition law gives that the above sum is equal to
\[
\underset{\{\mathfrak{s}_X \in \SpinC(X)\ | \ \mathfrak{s}_X|_{W} = \mathfrak{s},\  \mathfrak{s}_X|_{V_2} = \mathfrak{p}_2\}}{\sum} F_{V_1 ^{\dagger}, \mathfrak{s}_X|_{V_1}}^+\circ F_{W_0^{\dagger},\mathfrak{s}_X|_{W_0}}^+ (c(\pi_2)).
\]

Note that $\langle c_1(\mathfrak{s}_X|_{W_0}),[F]\rangle=\langle c_1(\mathfrak{t}_2),[F]\rangle=2-2g$. Lemma \ref{lem:lfiso} implies that there is at most one non-zero contribution to this sum, coming from a term where $\mathfrak{s}_X |_{W_0}$ is the canonical Spin$^c$-structure $\mathfrak{k}_0$, in which case $\mathfrak{s}_X = \mathfrak{k}_X$ and $\mathfrak{s} = \mathfrak{k}$ by the preceding discussion. We have
\[ F_{W^{\dagger},\mathfrak{s}}^+ (c(\xi_2)) = \begin{cases} 
      F^+_{V_1 ^{\dagger}}\circ F_{W_0 ^{\dagger},\mathfrak{k}_0}^+(c(\pi_2)) & \text{ for }\mathfrak{s} =\mathfrak{k} \\
      0 & \text{ otherwise.}
   \end{cases}
\]
Moreover, Lemma \ref{lem:lfiso} also tells us that $F_{W_0 ^{\dagger},\mathfrak{k}_0}^+$ is an isomorphism mapping $c(\pi_2)$ to $c(\pi_1)$, thus
\[F_{W^{\dagger},\mathfrak{s}}^+ (c(\xi_2)) = \begin{cases} 
      F^+_{V_1 ^{\dagger}}(c(\pi_1)) = c(\xi_1) & \text{ for }\mathfrak{s} =\mathfrak{k} \\
      0 & \text{ otherwise.}
   \end{cases}
\]
\end{proof}
\noindent Having established the claimed naturality result  for a Stein 0- or 1-handle and for a collection of Stein 2-handles, the theorem follows from the composition law for cobordism maps.
\end{proof}

\begin{remark} Echoing Remark \ref{rem:HKMinvariance}, one could alternatively approach Theorem \ref{thm:naturality} using the Honda-Kazez-Mati{\'c} interpretation of the contact invariant.  
Using their Heegaard diagrams, the proof hinges on (a) showing that there exists a unique pseudo-holomorphic triangle contributing to  $F_{W^{\dagger}}^+ (c(\xi_2))$ whose domain is a union of small triangles having corners at the components of a generator representing $c(\xi_1)$ and (b)
identifying the Spin$^c$ structure associated to this pseudo-holomorphic triangle with the canonical one.

\end{remark}

\noindent We conclude with the following immediate corollary of Theorem \ref{thm:naturality}, which generalizes the main result of \cite{OlgaDistinct}:

\begin{cor}{\em(cf. \cite[Theorem 2]{OlgaDistinct}) }\label{olgageneralization} Let $W$ be a smooth 4-manifold with boundary, equipped with two Stein structures $J_1,J_2$ with associated {\em Spin}$^c$ structures $\spinc_1,\spinc_2$, and let $\xi_1,\xi_2$ be the induced contact structures on $Y$, the outgoing boundary of $W$. If the {\em Spin}$^c$ structures $\spinc_1$ and $\spinc_2$ are not isomorphic, then the contact invariants $c(\xi_1), c(\xi_2)$ are distinct elements of $\HFa(-Y)$.
\end{cor}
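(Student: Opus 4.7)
The plan is to derive the corollary as an immediate consequence of Theorem \ref{thm:naturality} by viewing $W$ with each of its Stein structures as a Stein cobordism from the empty $3$-manifold to $(Y, \xi_i)$. Under this viewpoint, $W^\dagger$ is a cobordism from $-Y$ to $\emptyset$, so for each $\spinc \in \SpinC(W)$ we obtain an induced map $F_{W^\dagger, \spinc}: \HFa(-Y) \to \HFa(\emptyset) \cong \F$.

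I would then apply Theorem \ref{thm:naturality} separately to $(W, J_1)$ and $(W, J_2)$. For each $i \in \{1,2\}$, this yields $F_{W^\dagger, \spinc_i}(c(\xi_i)) = c(\emptyset) = 1$, while $F_{W^\dagger, \spinc}(c(\xi_i)) = 0$ for every $\spinc \ne \spinc_i$, where $\spinc_i$ is the canonical Spin$^c$ structure associated to $J_i$. To conclude, I would argue by contradiction: assuming $c(\xi_1) = c(\xi_2) \in \HFa(-Y)$, applying $F_{W^\dagger, \spinc_1}$ to both sides would give $1$ on the left (since $\spinc_1$ is canonical for $J_1$) and $0$ on the right (since $\spinc_1 \ne \spinc_2$ means $\spinc_1$ is not canonical for $J_2$), yielding the desired contradiction.

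The only non-obvious point, really a matter of bookkeeping rather than a genuine obstacle, is verifying that Theorem \ref{thm:naturality} covers the case where the incoming end $Y_1$ is empty, and that the contact invariant of the empty manifold is identified with the generator of $\HFa(\emptyset) \cong \F$. Both are standard and in fact follow from the Stein $0$-handle discussion inside the proof of Theorem \ref{thm:naturality}. Beyond this, the argument is a one-line consequence of the Spin$^c$-refined naturality, and requires no additional input.
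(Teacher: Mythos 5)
Your argument is correct and is precisely the ``immediate'' derivation from Theorem \ref{thm:naturality} that the paper intends (the paper offers no separate proof). One cosmetic alternative that avoids reasoning about the empty $3$-manifold is to remove a small ball from the interior of $W$ and view $W\setminus B^4$ as a Stein cobordism from $(S^3,\xi_{std})$ to $(Y,\xi_i)$; Theorem \ref{thm:naturality} then gives $F_{(W\setminus B^4)^\dagger,\spinc_i}(c(\xi_i))=c(\xi_{std})\ne 0\in\HFa(S^3)$ and $F_{(W\setminus B^4)^\dagger,\spinc}(c(\xi_i))=0$ for $\spinc\ne\spinc_i$, whence $c(\xi_1)\ne c(\xi_2)$ as before---but this is just a repackaging of your argument, not a genuinely different route.
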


\bibliographystyle{plain}
\bibliography{SteinCobC}

\end{document}